\newtheorem{theorem}{Theorem}[section]
\newtheorem{lemma}[theorem]{Lemma}
\newtheorem{proposition}[theorem]{Proposition}
\theoremstyle{remark}
\newtheorem*{example}{Example}
\newtheorem*{remark}{Remark}
\newcommand{\N}{\mathbb N}
\newcommand{\C}{\mathbb C}
\newcommand{\R}{\mathbb R}
\def\({\left(}
\def\){\right)}
\newcommand{\abs}[1]{\left| #1 \right|}
\newcommand{\cS}{\mathcal S}
\newcommand{\cO}{\mathcal O}
\newcommand{\He}{\mathcal{H}}
\DeclareMathOperator{\Hom}{Hom}
\DeclareMathOperator{\vol}{vol}
\DeclareMathOperator{\Ind}{Ind}
\DeclareMathOperator{\Xcch}{X_\bullet}
\newcommand{\GL}{\mathrm{GL}}
\newcommand{\Sp}{\mathrm{Sp}}
\newcommand{\PGSp}{\mathrm{PGSp}}
\newcommand{\Spin}{\mathrm{Spin}}
\newcommand{\GSp}{\mathrm{GSp}}
\newcommand{\SL}{\mathrm{SL}}
\newcommand{\PGL}{\mathrm{PGL}}
\newcommand{\SO}{\mathrm{SO}}
\newcommand{\smatr}[4]{\(\begin{smallmatrix} #1 & #2 \\ #3 & #4\end{smallmatrix}\)}
\newcommand{\smthree}[9]{\(\begin{smallmatrix} #1 & #2 & #3 \\ #4 & #5 & #6 \\ #7 & #8 & #9 \end{smallmatrix}\)}
\newcommand{\wt}{\widetilde}
\newcommand{\wh}{\widehat}
\def\bfG{\mathbf{G}}
\def\bfV{\mathbf{V}}
\def\Oct{\mathbb{O}}
\DeclareMathOperator{\Tr}{Tr}
\title{Matching of Hecke operators for Exceptional dual pair correspondences}
\dedicatory{
to Steve Rallis, in memoriam} 
\author{Gordan Savin and Michael Woodbury}
\address{G. S.: Department of Mathematics, University of Utah, Salt Lake City, UT}\email{savin@math.utah.edu}
\address{M. W.: Department of Mathematics, Columbia University, New York, NY} \email{woodbury@math.columbia.edu}
\thanks{Partially supported by NSF grant DMS 0852429.}
\begin{document}

\begin{abstract}  
Let $\mathbf{G}$ be a split algebrac group of type $E_n$ defined over a $p$-adic field.  This group contains a dual pair $G \times G'$ 
where one of the groups is of type $G_2$. The minimal representation of $\mathbf{G}$, when restricted to the dual pair, gives a 
correspondence of representations of the two groups in the dual pair. We prove a matching of spherical Hecke algebras of $G$ and $G'$, 
when acting on the minimal representation. This implies that the correspondence is functorial, in the sense of Arthur and Langlands, 
for spherical representations. 
\end{abstract} 

\maketitle

\section{Introduction}

Let $F$ be a $p$-adic field with ring of integers $\cO$ and fixed inverse of uniformizer $\varpi$.   Let $q$ be the order of the 
residual field.  We fix an absolute value on $F$ so that $|\varpi |=q$. 
We consider exceptional dual pairs $G\times G'$ inside of an adjoint group $\mathbf{G}$ where each group consists of the $F$ points of a split reductive algebraic group.  
 Denote by $K$ and $K'$ hyperspecial maximal compact subgroups of $G$ and $G'$ respectively.  Let $(\Pi,\bfV)$ be the minimal representation of $\bfG$.

If $\sigma'$ is an irreducible representation of $G'$, we call an irreducible representation $\sigma$ of $G$ a \emph{$\Theta$-lift of $\sigma'$} if $\sigma\otimes \sigma'$ is a quotient of $\Pi$.  If $\sigma,\sigma'$ are spherical, we will prove  that the correspondence $\sigma\leftrightarrow \sigma'$ is functorial with respect to a natural injection on the dual groups
 $$ r: \wh{G}'(\C)\to \wh{G}(\C). $$

To be more precise, let $C$ be the centralizer of $r(\widehat G')$ in $\widehat G$. Then $C$ is a reductive, possibly 
finite,  group. Let $f: \SL_2(\C) \rightarrow C$ be a map corresponding to the regular unipotent 
orbit in $C$ by Jacobson-Morozov. Let
\[ 
s= f\smatr{q^{1/2}}{0}{0}{q^{-1/2}}. 
\] 
Let $\He$ and $\He'$ denote the spherical Hecke algebras of $G$ and $G'$ respectively.  Let $T\in\He$ correspond, via the Satake isomorphism, to a finite dimensional representation $V$ of $\wh{G}(\C)$.  (We describe this in more detail in Section~\ref{sec:satake} below.)    Write $V=\sum V'\otimes V'' $, the restriction of $V$ to $\wh G' \otimes C$. 
 We define a map $\tilde{r}:\He \to \He'$ by 
\begin{equation}\label{eq:rtildemap}
 \tilde{r}(V)= \sum_{V'} \Tr_{V''}(s) V' .
\end{equation}

Note that if $C$ is finite, then $s=1$, the identity in $\wh{G}$, and $\tilde{r}(V)$ is just the  restriction of $V$ to $\wh G'$. 
We consider the following dual pairs:
\begin{table}[h]
$$
\begin{array}{l||c|c|c|c|c}
\bfG & D_4 & D_5 & E_6 & E_7 & E_8 \\ \hline
G'& S_3 & \PGL_2 & \PGL_3 &   G_2   & G_2 \\ \hline
G & G_2 &  G_2   &   G_2  & \PGSp_6 & F_4  
\end{array}
$$
\caption{Dual pairs $G\times G'\subset \bfG$}
\label{table:dualpairs}
\end{table}

\begin{theorem}\label{theorem}
For the dual pairs $G\times G'\subset \bfG$ in Table \ref{table:dualpairs}, $T\in \He_{G}$ and $\wt{r}$ given by \eqref{eq:rtildemap}, $\Pi(T)=\Pi(\wt{r}(T))$ as operators on $\bfV^{K\times K'}$.
\end{theorem}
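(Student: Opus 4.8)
The plan is to reduce the statement to a finite computation on a single vector and then carry it out in an explicit model of $\Pi$.

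\emph{Two reductions.} First, both sides are algebra homomorphisms in $T$: the map $\He\to\mathrm{End}(\bfV^{K\times K'})$, $T\mapsto\Pi(T)$, is a homomorphism of commutative algebras (convolution acts on $K$-invariants, and $\Pi(T)$ preserves $\bfV^{K'}$ because $G$ commutes with $G'$), while $\wt r$ is a ring homomorphism $\He\to\He'$ since, via Satake, it is the composite of the restriction map $R(\wh G)\to R(\wh G')\otimes R(C)$ with $\mathrm{id}\otimes(\text{evaluation of the character at }s)$, each of which is a ring homomorphism. Hence it suffices to check $\Pi(T)=\Pi(\wt r(T))$ for $T$ ranging over a set of algebra generators of $\He$: the two Hecke operators attached to the $7$- and $14$-dimensional representations of $G_2(\C)$ when $G=G_2$ (covering the $D_4$, $D_5$, $E_6$ rows), and the operators attached to the fundamental representations when $G=\PGSp_6$ or $F_4$. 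Second, since $\Pi(\He)$ and $\Pi(\He')$ commute inside $\mathrm{End}(\bfV^{K\times K'})$, if $\bfV^{K\times K'}$ is generated as an $\He'$-module by the normalized spherical vector $v_0$ — which I expect to follow from the known description of $\bfV^{K'}$ as a $G$-module, i.e.\ as the relevant constituent of the theta lift of the trivial representation of $G'$ — then for $w=\Pi(S)v_0$ with $S\in\He'$ one has $\Pi(T)w=\Pi(S)\Pi(T)v_0$ and $\Pi(\wt r(T))w=\Pi(S)\Pi(\wt r(T))v_0$, so the theorem reduces to the single identity $\Pi(T)v_0=\Pi(\wt r(T))v_0$ in $\bfV$.

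\emph{The computation.} I would fix a concrete model of the minimal representation adapted to the dual pair — for instance the Fourier–Jacobi type realization with respect to the Heisenberg parabolic of $\bfG$, in which (a conjugate of) $G'$ sits in a Levi factor and the spherical vector $v_0$ is given by an explicit formula in terms of the cubic/Jordan structure underlying $\bfG$. For a generating Hecke operator $T=T_V$ associated to a double coset $KgK=\bigsqcup_i Kg_i$, write $\Pi(T)v_0=\sum_i\Pi(g_i)v_0$ and evaluate the sum in the model. On the other side, $\wt r(T)=\sum_{V'}\Tr_{V''}(s)\,V'$ is an explicit element of $\He'$, so $\Pi(\wt r(T))v_0=\sum_{V'}\Tr_{V''}(s)\,\Pi(T_{V'})v_0$, and each term is computed by the same method. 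The two expressions are then matched term by term using the branching rule $V|_{\wh G'\times C}=\sum V'\otimes V''$; the crucial point is that the powers of $q$ generated by the coset decomposition on the $G$-side organize precisely into the characters $\Tr_{V''}(s)$, where $s=f\smatr{q^{1/2}}{0}{0}{q^{-1/2}}$ records the regular unipotent $\SL_2$ in the centralizer $C$.

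\emph{Conceptual shape of the matching.} Equivalently, one shows that $\bfV^{K'}$ is, as a $G$-module, attached to an Arthur parameter whose $\SL_2$-factor enters $\wh G$ through $f\colon\SL_2(\C)\to C$, so that the spherical constituent corresponding to a spherical $\sigma'$ with Satake parameter $t\in\wh G'$ has $G$-Satake parameter $r(t)\cdot s$. The identity $\Tr_V(r(t)s)=\sum_{V'}\Tr_{V'}(t)\,\Tr_{V''}(s)$, read off from the same branching rule, is then exactly the statement that $\Pi(T_V)$ and $\Pi(\wt r(T_V))$ act by the same scalar on the $\sigma\otimes\sigma'$-part of $\bfV^{K\times K'}$, which (together with the first reductions) gives the theorem.

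\emph{Main obstacle.} The principal difficulty is the explicit step: producing a model of $\Pi$ in which $v_0$ and the action of the generating Hecke operators admit closed-form evaluation, and controlling the coset decompositions $KgK=\bigsqcup_i Kg_i$ so that the resulting $q$-powers are recognized as $\Tr_{V''}(s)$ — this is where the specific structure of each row of the table (and the interpretation of $s$ via the regular unipotent in $C$) really gets used. A secondary difficulty is the cyclicity reduction: if $\bfV^{K\times K'}$ is not $\He'$-generated by $v_0$, or if the module fails to be semisimple, one must instead argue directly on each generalized eigenspace, again via the trace identity of the previous paragraph.
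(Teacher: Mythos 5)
There is a genuine gap: your proposal is a plan whose two essential steps are left unexecuted, and the second of them is close to circular. The first reduction (to algebra generators of $\He$) is fine, since $T\mapsto\Pi(T)$ and $T\mapsto\Pi(\wt r(T))$ are both algebra homomorphisms on $\bfV^{K\times K'}$. But the reduction to the single vector $v_0$ rests on the unproved claim that $\bfV^{K\times K'}$ is cyclic over $\He'$, which you justify by appealing to ``the known description of $\bfV^{K'}$ as the theta lift of the trivial representation'' --- yet determining the $G$-module structure of $\bfV^{K'}$ is essentially the content of the theorem (it is exactly the $D_4$ base case, proved in \cite{hms}, and is what the theorem computes in general), so you cannot assume it. The same circularity infects your ``conceptual shape'' paragraph: asserting that the spherical constituent attached to $\sigma'$ has Satake parameter $r(t)\cdot s$ \emph{is} the theorem, and the trace identity $\Tr_V(r(t)s)=\sum\Tr_{V'}(t)\Tr_{V''}(s)$ is just the definition of $\wt r$, not an argument. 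Finally, the ``computation'' --- evaluating $\sum_i\Pi(g_i)v_0$ in an explicit model and recognizing the resulting $q$-powers as $\Tr_{V''}(s)$ --- is precisely the hard part, and you acknowledge you have not done it; for groups of type $E_7$, $E_8$ and operators attached to fundamental representations this is not a routine finite check.

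The paper's route is entirely different and avoids every one of these obstacles. It argues by induction on the rank of $\bfG$, with base case $D_4$ quoted from \cite{hms}. For the inductive step it uses the injection $\bfV^{K\times K'}\hookrightarrow r_{\bar U}(\bfV)^{K\times K'_L}$ (Lemma~\ref{lem:borel}, Casselman--Borel) together with the compatibility of Hecke action with the relative Satake transform (Lemma~\ref{lem:main}), so that matching need only be verified on the Jacquet module $r_{\bar U}(\bfV)$. That module has an explicit filtration (Propositions~\ref{prop:msD5}--\ref{prop:swE8}, the last proved in Section~\ref{sec:E8Jac}) whose graded pieces are either regular representations $C_c^\infty(\GL_m)$ of $\GL_m\times\GL_m$ --- where matching is immediate from Lemma~\ref{lem:easymatch} --- or the minimal representation of the smaller group $M$, where the induction hypothesis applies after bookkeeping with the twisting characters (Lemmas~\ref{lem:firstmain}--\ref{lem:thirdmain}). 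No explicit coset decomposition or spherical-vector formula is ever needed. If you want to salvage your approach, you would have to either prove the cyclicity of $\bfV^{K\times K'}$ over $\He'$ independently or, as you note, work eigenspace by eigenspace --- but the latter again presupposes knowing which $\sigma\otimes\sigma'$ occur, i.e.\ the theorem itself.
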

As a matter of terminology, if the actions of $T$ and $\wt{r}(T)$ agree on a space $V$, or, more precisely, on a subset of fixed vectors, for all $T\in \He_G$ we will say there is a \emph{matching of Hecke operators} of $\He_G$ and $\He_{G'}$ on $V$ or, more concisely, \emph{matching} on $V$.  We trust that the precise space of fixed vectors will be clear from context.

An analogue of Theorem \ref{theorem} is well known in the case of classical theta correspondences \cite{mvw}. 
For exceptional groups, the first example of matching was obtained by Rallis and Soudry in \cite{rs}. 

The proof of Theorem \ref{theorem} is by induction on the rank of $\bfG$. 
The main tool is Jacquet functors of  $\bfV$ with respect to maximal parabolic subgroups of dual pairs. Most of the 
needed functors were computed in \cite{ms}. One remaining, but rather remarkable case (for $\bfG=E_8$), is computed in the last section.

\section{The Satake isomorphism}\label{sec:satake}

Before giving the proof of Theorem~\ref{theorem} we review the facts about the Hecke algebra and the Satake isomorphism (most of which can be found in \cite{gross}) which will be relevant, and we give some general lemmas which will be key in the proof of Theorem~\ref{theorem}.

\subsection{The Hecke algebra}

Let $G$ be a split reductive group, $K$ a hyperspecial maximal compact subgroup, $B=TU$ a Borel subgroup.  There is an Iwasawa decomposition $G=BK$, and the choice of Borel gives a set $\Phi^+$ of positive roots.

We identify the cocharacters of $T$, $\Xcch(T)$, with the coweight lattice $\Lambda_c$ so that for every cocharacter 
$\lambda:F^\times\to T$, the adjoint action of $\lambda(t)$ on the root subgroup of $U$, corresponding to a root $\alpha$, is 
given by the multiplication by the scalar $t^{\langle \lambda, \alpha \rangle}$. We have a  Cartan decomposition:

\begin{proposition}\label{prop:doublecosets} 
The group $G$ is the disjoint union of double cosets $K\lambda(\varpi) K$ for $\lambda\in \Lambda_c^+$ where 
$$ \Lambda_c^+ = \{ \lambda \in \Lambda_c\mid \langle \lambda, \alpha \rangle \geq 0 \mbox{ for all } \alpha\in \Phi^+\}. $$
\end{proposition}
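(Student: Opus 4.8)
The statement splits into an \emph{existence} assertion --- every $g\in G$ lies in $K\lambda(\varpi)K$ for some $\lambda\in\Lambda_c^+$ --- and a \emph{disjointness} assertion, and I would dispose of disjointness first, since it is essentially formal. Suppose $\lambda(\varpi)\in K\mu(\varpi)K$ with $\lambda,\mu\in\Lambda_c^+$; it suffices to show $\langle\lambda,\chi\rangle\le\langle\mu,\chi\rangle$ for every dominant $\chi\in\Xch(T)$, since then exchanging $\lambda$ and $\mu$ gives equality, and as the highest weights of the irreducible algebraic representations of $G$ span $\Xch(T)\otimes\Q$ this forces $\lambda=\mu$. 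Write $\lambda(\varpi)=k_1\mu(\varpi)k_2$ with $k_i\in K$, pick an irreducible representation $(\rho,V)$ of highest weight $\chi$ with a weight basis adapted to a $K$-stable $\cO$-lattice, and let $\|\cdot\|$ be the resulting ($K$-invariant) sup-norm. For a highest-weight vector $v_\chi$ one has $\|\lambda(\varpi)v_\chi\|=q^{\langle\lambda,\chi\rangle}\|v_\chi\|$, while expanding $k_2v_\chi$ in the weight basis and using $q>1$ together with $\langle\mu,\beta\rangle\le\langle\mu,\chi\rangle$ for every weight $\beta$ of $V$ (because $\mu$ is dominant and $\chi-\beta$ is a nonnegative integral combination of simple roots) yields $\|\lambda(\varpi)v_\chi\|=\|\mu(\varpi)k_2v_\chi\|\le q^{\langle\mu,\chi\rangle}\|k_2v_\chi\|=q^{\langle\mu,\chi\rangle}\|v_\chi\|$, which is what was wanted.

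For existence I would work in the Bruhat--Tits building $X$ of $G$: $K$ is the stabilizer of a hyperspecial vertex $o$, and $T(F)$ acts on the standard apartment $\mathcal A$ attached to $T$ through $\Xcch(T)=\Lambda_c$, with $\lambda\mapsto\lambda(\varpi)\cdot o$ describing the $T(F)$-orbit of $o$ in $\mathcal A$. Given $g\in G$, the vertices $o$ and $g\cdot o$ lie in a common apartment, which by transitivity of $K$ on the apartments through $o$ may be carried onto $\mathcal A$ by some $k\in K$; then $kg\cdot o\in\mathcal A$, and lying in the $G$-orbit of $o$ it must lie in the $T(F)$-orbit of $o$, say $kg\cdot o=\lambda(\varpi)\cdot o$ for some $\lambda\in\Lambda_c$. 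Hence $\lambda(\varpi)^{-1}kg$ fixes $o$, so lies in $K$, and $g\in K\lambda(\varpi)K$. Finally $N_G(T)$ has representatives in $K$ realizing the Weyl group $W$, which acts on $\mathcal A$ with $\Lambda_c^+$ a fundamental domain, so after replacing $\lambda$ by a $W$-conjugate we may assume $\lambda\in\Lambda_c^+$. (Alternatively one can argue from the Iwasawa decomposition $G=BK$, iteratively absorbing the unipotent part into $T$ and $K$ via the rank-one identity $\smatr{1}{x}{0}{1}\in\SL_2(\cO)\,\smatr{\varpi^m}{0}{0}{\varpi^{-m}}\,\SL_2(\cO)$ for $|x|=q^m$ inside each root $\SL_2$, but then one must check this process terminates.)

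The main obstacle is the existence half. Disjointness is a direct application of highest-weight theory together with the $K$-invariance of a lattice norm; surjectivity onto $\Lambda_c^+$, by contrast, requires genuine input beyond the Iwasawa decomposition --- either the building axioms and transitivity of $K$ on the apartments through $o$ (with the identification of $\Lambda_c^+$ as a fundamental domain for $W$), or, in the elementary route, a careful inductive taming of the unipotent radical. I would favor the building argument, as it also makes transparent why the indexing set is exactly $\Lambda_c^+$ rather than a cover of it.
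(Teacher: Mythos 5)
The paper does not actually prove Proposition~\ref{prop:doublecosets}: it records the Cartan decomposition as a standard fact (implicitly from the reference \cite{gross}) and only illustrates it for $\GL_3$ via the theory of elementary divisors. Your argument is therefore a genuinely different route --- a self-contained proof along standard Bruhat--Tits lines --- and it is correct. The disjointness half is a clean and complete version of the usual ``lattice norm'' argument; the only detail worth flagging is that you need the $K$-stable lattice in $V_\chi$ to be simultaneously a direct sum of its intersections with the weight spaces, which is automatic if you take the lattice to come from an $\cO$-form of $(\rho,V)$ for the reductive $\cO$-model of $G$ defining the hyperspecial $K$ (an arbitrary $K$-stable lattice need not obviously have a weight basis). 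The existence half correctly reduces to two standard building facts: transitivity of $K=\stab_G(o)$ on apartments containing $o$, and the identification of $G\cdot o\cap\mathcal A$ with the $T(F)$-orbit $\{\lambda(\varpi)\cdot o\}$; the latter deserves one more line (if $g\cdot o=a\in\mathcal A$, choose $h$ fixing $\mathcal A\cap g\mathcal A$ pointwise with $hg\mathcal A=\mathcal A$, so $hg\in N_G(T)$ and $a=hg\cdot o$ lies in the $N_G(T)$-orbit of $o$, which is the $\Lambda_c$-orbit since $W$ fixes $o$). Compared with the paper's elementary-divisors viewpoint, which is genuinely elementary but special to $\GL_n$, your approach works uniformly for any split reductive $G$ and, as you note, explains why the index set is exactly $\Lambda_c^+$ (a fundamental domain for $W$ on $\Lambda_c$) with no repetitions; the cost is the reliance on the building axioms, which for the purposes of this paper is an entirely reasonable black box.
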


\begin{example}
$G=\GL_3(F)$, $\wh{G}=\GL_3(\C)$.  If $\alpha_1=(1,-1,0)$ and $\alpha_2=(0,1,-1)$ are the simple roots, then 
$P^+$ consists of  $\lambda=(l,m,n)$ such that $l\geq m\geq n$. Then 
 $$ \lambda(t) = \smthree{t^l}{}{}{}{t^m}{}{}{}{t^n}$$ 
and it is easy to verify directly, from the theory of elementary divisors, that 
 $$ \GL_3(F) = \bigsqcup_{l\geq m\geq n} \GL_3(\cO) \smthree{\varpi^{l}}{}{}{}{\varpi^{m}}{}{}{}{\varpi^{n}} \GL_3(\cO). $$
\end{example}

Irreducible representations of $\wh{G}$, the complex dual group of $G$, are parameterized by their highest weights $\lambda\in \Lambda_c^+$.  Let $R(\wh{G})$ be the representation ring of $\wh{G}$.  That is, $R(\wh{G})$ is the $\C$-vector space with basis consisting of the irreducible representations of $\wh{G}$.  We denote the representation of highest weight $\lambda$ by $V_\lambda$, and consider the map 
\begin{equation}\label{eq:VtoT}
 R(\wh{G})\to \C[\Lambda_c] \qquad V_\mu \mapsto \sum_{\mu}m_\lambda(\mu)[\mu] 
\end{equation}
where $m_\lambda(\mu)$ is the dimension of the $\mu$-weight space in $V_\lambda$.  Letting $W$ denote the Weyl group, this gives an isomorphism $R(\wh{G})\simeq \C[\Lambda_c]^W$.

The Hecke algebra $\He_G$ consists of all locally constant compactly supported $K$-biinvariant functions $f:G\to \C$.  By Proposition~\ref{prop:doublecosets}, $\He_G$ has a basis consisting of the characteristic functions of $K\lambda(\varpi)K$ with $\lambda\in \Lambda_c^+$.  We denote these by $T_\lambda$.

If $(\sigma,V)$ is a smooth $G$-module then the action of $f\in \He_G$ is given by
 $$ f*v = \int_{G}f(g)\sigma(g)vdg. $$ 
We normalize the Haar measure $dg$ so that $\vol(K)=1$.  Let $U$ be the unipotent radical of $B$ as above.  By the Iwasawa decomposition, we can write
\begin{equation}\label{eq:double2single}
 K\lambda(\varpi)K = \bigcup_u\bigcup_t ut  K
\end{equation}
for some representatives $t\in T$ and $u\in U$.  So if $v$ is a $K$-fixed vector then
\begin{equation}\label{eq:heckeactionfull}
 T_\lambda* v = \sum_{u,t} \sigma(u)\sigma(t)v.
\end{equation}
If $r_U: V \rightarrow V_U$ is the natural projection, where $V_U$ is the space of $U$-coinvariants, then 
\begin{equation}\label{eq:heckeaction}
 r_U(T_\lambda* v) = \sum_t n(t) r_U(\sigma(t)v)
\end{equation}
where $n(t)$ is the number of single cosets of type $utK$ appearing in $K\lambda(\varpi)K$.

\subsection{The (relative) Satake transform}

Let $\delta_U$ denote the modular character of $B$ given by
 $$d(bub^{-1})=\delta_U(b)du.$$
The measure $du$ is normalized so that $\vol(K\cap U)=1$.  Obviously, $\delta_U$ is trivial on $U$, and so this defines a character $\delta:T\to \R^\times_+$.  We take $\delta_U^{1/2}(t)$ to be the positive square root of this character.  Let $\Phi^+$ denote the positive roots of $G$ (determined by $B$.)  Then if $\mu\in X_\bullet(T)$,
\begin{equation}\label{eq:modchar}
 \delta^{1/2}(\mu(\varpi))=q^{\langle \mu,\rho \rangle}, \qquad 2\rho=\sum_{\alpha\in \Phi^+} \alpha.
\end{equation}

The Satake transform $\cS_T:\He_G\to \He_T$ is given by
\begin{equation}\label{eq:sataki}
 \cS_Tf(t) = \delta(t)^{1/2} \int_{U} f(tu)du.
\end{equation}
It is a fact that $\cS_T$ is injective and its image is equal to the Weyl group invariants.  Since $\He_T = \C[\Xcch(T)]=\C[\Lambda_c]$, it follows from our discussion above that this defines an isomorphism $\cS:\He_G\to R(\wh{G})$.

The Satake transform can be defined analogously for any parabolic $P=MN$.  This yields the \emph{relative Satake transform} $\cS_M: \He_G\to \He_M$:
\begin{equation}\label{eq:relsataki}
 \cS_Mf(m) = \delta_N^{1/2}(m)\int_Nf(mn)dn. 
\end{equation}
As above, $dn$ is the measure which gives $N\cap K$ volume 1, and $\delta_N:M\to \R^\times_+$ is the modular character.

We may assume $P\supset B$, so the composition of $\cS_M$ with the Satake transform from $\He_M$ to $\He_T$ is $\cS_T$.

\subsection{Some general lemmas}\label{sec:generallemmas}

In this section, we prove various simple lemmas which will be used in our proof of Theorem~\ref{theorem}.

Throughout this paper parabolic induction and the Jacquet functors  will be normalized as follows.  Let $P=MN$ be a parabolic subgroup of $G$.  Suppose that $(\sigma,W)$ is a representation of $M$ which we extend trivially to $P$.  Then we define $(\rho, i_P^G(W))$ to be the representation of $G$ consisting of smooth functions $f:G\to W$ which satisfy,
\begin{equation}\label{eq:pinduction}
 f(mng) = \delta_N^{1/2}(m)\sigma(m)f(g) \qquad \mbox{ for all $m\in M,n\in N,g\in G$}
\end{equation}
with right regular action $\rho(g)f:h\mapsto f(hg)$.  Note that the usual induction functor is
\begin{equation}\label{eq:usualind}
 \Ind_P^G(W) = i_P^G( \delta_N^{-1/2}\otimes W ).
\end{equation}

If $(\pi,V)$ is a representation of $G$, the Jacquet functor with respect to $N$, $(\pi_M, r_N(V))$, is defined as follows.  As usual,
 $$ V(N)=\langle \pi(n)v-v\mid n\in N, v\in V\rangle, $$
so $V_N=V/V(N)$ is the space of coinvariants.   Let $(\pi_M, r_N(V))$ be a representation of $M$ such that $r_N(V)=V_N$, but the 
 $M$-action is given by
\begin{equation}\label{eq:jacaction}
 \pi_M(m)v = \delta_N^{-1/2}(m)\pi(m)v.
\end{equation}
Since $N$ is normal, it is trivial to see that this is well defined.

Since the induction and the Jacquet functor are normalized, the statement of Frobenius reciprocity is quite simple:
\begin{equation}
 \Hom_G(V,i_P^G(W)) = \Hom_M(r_N(V),W).
\end{equation}

\begin{lemma}\label{lem:main}
Suppose that $G$ is a split reductive group, $K\subset G$ is a maximal compact subgroup and $MN=P\subset G$ is a parabolic subgroup.  Let $K_M=K\cap M$.  For $(\pi,V)$ any smooth representation of $M$, the following statements hold.
\begin{itemize}
 \item[(i)] The map $\varphi:(i_P^G(V))^K\to V^{K_M}$ given by $f\mapsto f(1)$ is an isomorphism.
 \item[(ii)] If $T\in \He$ and $v\in (i_P^G(V))^K$ then $\varphi(T*v)=\cS_M(T)*\varphi(v)$.
\end{itemize}
\end{lemma}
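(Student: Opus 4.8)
The plan is to prove (i) first by a standard Iwasawa-decomposition argument, then deduce (ii) by unwinding the definition of the Hecke action and comparing it, coset by coset, with the definition of the relative Satake transform \eqref{eq:relsataki}. For part (i), I would use the Iwasawa decomposition $G = PK$ together with the fact that $K$ is ``in good position'' relative to $P$, i.e.\ $P \cap K = K_M (N\cap K)$ and the product map $K_M \times (N\cap K) \to P\cap K$ behaves well; this is exactly what the hyperspecial hypothesis buys. Given $f \in (i_P^G(V))^K$, the transformation rule \eqref{eq:pinduction} together with $G=PK$ shows $f$ is determined by $f(1)$, and $f(1)$ is $K_M$-fixed since for $k \in K_M \subset P \cap K$ one has $f(1) = f(k) = f(k\cdot 1) = \delta_N^{1/2}(k)\pi(k)f(1) = \pi(k)f(1)$, using that $\delta_N$ is trivial on the compact group $K_M$. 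Conversely, given $w \in V^{K_M}$, one defines $f$ on $G = PK$ by $f(mnk) = \delta_N^{1/2}(m)\pi(m)w$; the only thing to check is well-definedness, which comes down to $P \cap K$ acting trivially, and that is again the $K_M$-invariance of $w$ plus triviality of $\delta_N$ on $P\cap K$. So $\varphi$ is a bijection, and it is visibly linear and $\He$-equivariant once (ii) is known (or one notes it intertwines nothing extra here), giving the isomorphism.

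For part (ii), I would compute both sides using the coset description. Write $K\lambda(\varpi)K = \bigsqcup_j g_j K$ as in \eqref{eq:double2single}; then $T_\lambda * v = \sum_j \rho(g_j) v$, so $\varphi(T_\lambda * v) = \sum_j (\rho(g_j)v)(1) = \sum_j v(g_j)$. Now decompose each $g_j$ via Iwasawa in $P = MN$: write $g_j = m_j n_j k_j$ with $k_j \in K$, and note $v(g_j) = v(m_j n_j) = \delta_N^{1/2}(m_j)\pi(m_j) v(1) = \delta_N^{1/2}(m_j)\pi(m_j)\varphi(v)$, where I have used that $k_j \in K$ and $v$ is $K$-fixed, and that $v(1)$ is $K_M$-fixed so the $M$-part is well-defined modulo $K_M$. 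Thus $\varphi(T_\lambda * v) = \sum_j \delta_N^{1/2}(m_j)\pi(m_j)\varphi(v)$, and the content of the lemma is that the multiset of $K_M M$-cosets of the $m_j$'s, weighted by $\delta_N^{1/2}$, is precisely the function $\cS_M(T_\lambda) \in \He_M$. This is a repackaging of the defining integral \eqref{eq:relsataki}: pushing the characteristic function of $K\lambda(\varpi)K$ forward along $\int_N$ and multiplying by $\delta_N^{1/2}$ counts exactly how many of the single cosets $g_j K$ land in a given $M$-coset mod $N$, because the measure on $N$ is normalized by $\vol(N\cap K)=1$ and the cosets $g_j K$ partition $K\lambda(\varpi)K$ compatibly with the fibration $P \to M$. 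By linearity this extends from the basis elements $T_\lambda$ to all of $\He$.

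The main obstacle — really the only nontrivial point — is the bookkeeping in the second half of part (ii): matching the combinatorial count of single cosets $g_j K$ lying over a fixed $m \in M/K_M$ with the integral $\int_N \mathbf{1}_{K\lambda(\varpi)K}(mn)\,dn$, and tracking the $\delta_N^{1/2}$ factors so that the normalizations of $dg$, $dn$, and $\vol(N\cap K)=1$ all line up. The cleanest way to organize this is to factor the relative Satake transform through the full one, as the excerpt notes ($\cS_T = \cS_T^M \circ \cS_M$ after choosing $B \subset P$), or alternatively to prove (ii) directly for $P = B$ (the classical statement relating $\cS_T(T_\lambda)$ to the coset counts $n(t)$ of \eqref{eq:heckeaction}) and then observe that the general case follows from the $B$ case by transitivity of induction, $i_P^G \circ i_{B\cap M}^M = i_B^G$, which reduces the parabolic-$P$ statement to the already-known Borel statement applied twice. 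I expect the write-up to take this transitivity route, since it avoids re-deriving measure normalizations from scratch.
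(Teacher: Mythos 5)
Your proposal is correct and follows essentially the same route as the paper: part (i) via the Iwasawa decomposition $G=PK$ with the explicit inverse $w\mapsto\bigl(mnk\mapsto\delta_N^{1/2}(m)\pi(m)w\bigr)$, and part (ii) by expanding $K\lambda(\varpi)K$ into single cosets, evaluating at $1$, and matching the weighted count of $M$-parts against the defining integral \eqref{eq:relsataki}. The paper does exactly this direct coset-counting comparison (introducing the multiplicities $n(i,j)$) rather than the transitivity-of-induction reduction to the Borel case that you guessed it might use, so your main argument, not your suggested alternative, is the one that appears.
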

\begin{proof}
Since $G=PK$ any element $g$ in $G$ can be written as $g=mnk$, where $k\in K$, $mn\in P$, and we can define 
 $$ \psi:V^{K_M}\to (i_P^G(V))^K $$
by specifying that $\psi(v)(mnk)=\delta_N(m)^{1/2}\pi(m)v$.  This is well defined precisely because $v\in V^{K_M}$.
Now, (i) follows by computing that $\psi$ is the inverse of $\varphi$. 

For remainder of the proof let $\cS=\cS_M$.  Let $T_\lambda$ be the characteristic function of $K\lambda(\varpi)K$.  Note that $\cS(T_\lambda)$ is determined by its values on a set of coset representatives for $M/K_M$.  We fix such a set.  Using the Iwasawa decomposition $G=PK$, we may write (in analogy to \eqref{eq:double2single})
\begin{equation}\label{eq:mndecomp}
 K\lambda(\varpi)K = \bigcup_{i}\bigcup_{j}m_in_i K 
\end{equation}
with the $m_i$ chosen from the given set of coset representatives.  If $m\notin K\lambda(\pi)K$, then obviously $\cS(T_\lambda)(m)=0$.  Otherwise, $m=m_{j_0}$ for some $m_{j_0}$ appearing in the decomposition \eqref{eq:mndecomp}.  Let
 $$ n(i,j) := \#\{n_i \mid m_jn_iK \mbox{ appears in \eqref{eq:mndecomp}}\}. $$
Then 
\begin{align*}
 \cS(T_\lambda)(m_{j_0}) & = \delta^{1/2}(m_{j_0})\int_{N} T_\lambda(m_{j_0}n)dn \\
 & = \delta^{1/2}(m_{j_0})\sum_{i,j} T_\lambda(m_{j_0}n_i) \\
 & = \delta^{1/2}(m_{j_0}) n(i,j_0),
\end{align*}
since $\vol(N/(K\cap N))=1$.

Let $f\in (\Ind_P^G{V})^K$.  By (i), $\varphi(f)=f(1)=v\in V^{K_M}$.  So, by the previous calculation,
\begin{align*}
 \cS(T_\lambda)*v & = \int_{M} \cS(T_\lambda)(m) \pi(m)v dm \\
  & = \sum_{j} \cS(T_\lambda)(m_j)\pi(m_j)v \\
  & = \sum_j \delta^{1/2}(m_j)n(i,j)\pi(m_j)v.
\end{align*}

On the other hand, since $f$ is fixed by $K$, 
\begin{align*}
 \varphi(T_\lambda*f) = (T_\lambda*f)(1) & = \int_G T_\lambda(g)\sigma(g)f(1)dg \\
  & = \sum_{i,j} f(1m_jn_i) \\
  & = \sum_{j} n(i,j)\delta^{1/2}(m_j)\pi(m_j)v.
\end{align*}
By Proposition~\ref{prop:doublecosets}, $\{T_\lambda\mid \lambda\in \Lambda_c^+\}$ forms a basis of $\He$.  Therefore, (ii) is proved.
\end{proof}

\begin{lemma}\label{lem:borel}
Let $G$ be a reductive group with $P=MN$ the Levi decomposition of a parabolic.  If $V$ is any $G$-module, the map $V^K \to V_N^{K_M}$ is injective.
\end{lemma}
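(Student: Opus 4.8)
The plan is to identify the map with $r_N$ restricted to $V^K$ and then to exploit the hyperspecial position of $K$ through the spherical Hecke algebra $\He$ of $G$. The map in the statement sends $v$ to its image $r_N(v)$ under the projection $V\to V_N$; this image lies in $V_N^{K_M}$ because $K_M\subset K$ and the normalizing character $\delta_N^{-1/2}$ of \eqref{eq:jacaction} is trivial on the compact group $K_M$. So the claim is that $V^K\cap V(N)=0$. After replacing $P$ by a conjugate we may assume $P\supset B$; then $N\subset U$, so $V(N)\subset V(U)$, and it suffices to prove the statement for the Borel, i.e. that $V^K\cap V(U)=0$.

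So suppose $v\in V^K$ lies in $V(U)$. Recall that $v\in V(U)$ exactly when $\int_{U_0}\pi(u)v\,du=0$ for some compact open subgroup $U_0\subset U$, and then for every compact open subgroup containing $U_0$. Fix a regular dominant coweight $\lambda$ and put $a=(w_0\lambda)(\varpi)\in T$, where $w_0$ is the longest Weyl element; then $|\alpha(a)|<1$ for all $\alpha\in\Phi^+$, so conjugation by $a$ strictly contracts $U$, the subgroups $a^{-k}(U\cap K)a^{k}$ increase to $U$ as $k\to\infty$, and $Ka^{k}K=K(k\lambda)(\varpi)K$ for all $k\ge 0$ by Proposition~\ref{prop:doublecosets}. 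Applying the description of $V(U)$ to the cofinal family $\{a^{-k}(U\cap K)a^{k}\}$, substituting $u\mapsto a^{-k}ua^{k}$, and cancelling the invertible operator $\pi(a^{-k})$, we obtain $\int_{U\cap K}\pi(u)\bigl(\pi(a^{k})v\bigr)\,du=0$ for all $k\gg 0$: the $(U\cap K)$-average of $\pi(a^{k})v$ vanishes. Since averaging over $K$ factors through averaging over $U\cap K$, the $K$-average of $\pi(a^{k})v$ also vanishes; but $v$ is $K$-fixed, so by a short computation of the type in the proof of Lemma~\ref{lem:main} the $K$-average of $\pi(a^{k})v$ equals $\vol(K\cap a^{k}Ka^{-k})\cdot T_{k\lambda}*v$, where $T_{k\lambda}\in\He$ is the characteristic function of $Ka^{k}K=K(k\lambda)(\varpi)K$. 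The volume factor being positive, we conclude $T_{k\lambda}*v=0$ for all $k\gg 0$, for every regular dominant coweight $\lambda$.

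Thus the annihilator of $v$ in $\He$ contains $T_{k\lambda}$ for every regular dominant $\lambda$ and all large $k$. Under the Satake isomorphism $\He\simeq R(\wh G)$ such operators generate the unit ideal: for a fixed $\lambda$ the generating series $\sum_{k\ge 0}\cS(T_{k\lambda})X^{k}$ is a rational function of $X$ with denominator a polynomial over $R(\wh G)$ of positive degree and constant term $1$, so on each semisimple conjugacy class $s$ of $\wh G$ the values $\cS(T_{k\lambda})(s)$ satisfy a linear recurrence that is invertible in both directions; were they to vanish for all $k\gg 0$ they would vanish for all $k$, contradicting $\cS(T_{0})(s)=1$. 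Hence the common zero locus of these operators in the space of semisimple classes of $\wh G$ is empty, the ideal they generate is all of $\He$, so its identity $\mathbf 1_K$ annihilates $v$ and $v=0$, as desired.

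The crux is to pass, in the second step, from the soft condition $v\in V(U)$ to the vanishing of genuine spherical Hecke operators $T_{k\lambda}*v$, and then in the third step to use the classical rationality of $\sum_k\cS(T_{k\lambda})X^{k}$ (Macdonald's formula) — equivalently, the fact that the $T_\nu$ with $\nu$ regular dominant and deep in the dominant cone generate the unit ideal of $\He$. The Hecke-operator computation of the second step may also be replaced by Jacquet's lemma applied to the Iwahori-fixed vectors $V^{I}$ with $I\subset K$ the Iwahori subgroup: $r_U(v)=0$ then forces $E^{n}(v)=0$ for the associated contracting operator $E$ and some $n$, and $e_K(E^{n}v)$ is again a positive multiple of $T_{n\lambda}*v$.
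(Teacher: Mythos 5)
Your argument is correct in substance, but it takes a genuinely different --- and much heavier --- route than the paper. The paper's proof is two lines: it quotes Borel's theorem that $V^I\hookrightarrow V_N^{I_M}$ for the Iwahori subgroup $I$, notes that $r_N$ intertwines the $P$-action so the image of $V^K$ is $K_M$-fixed, and uses $V^K\subset V^I$. You instead re-prove the statement at the level of $K$ itself: the reduction to the Borel via a $K$-conjugation, the criterion $v\in V(U)\Leftrightarrow e_{U_0}(v)=0$ together with the contraction by $a=(w_0\lambda)(\varpi)$, and the identity $e_K(\pi(a^{k})v)=\vol(K\cap a^{k}Ka^{-k})\,T_{k\lambda}*v$ are all correct, and they constitute the spherical analogue of the contraction argument underlying Borel's theorem. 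What your route avoids is the Iwahori--Hecke algebra (where the proof ends quickly because the relevant operators are invertible); what it costs is the last step, where you must show that the operators $T_{k\lambda}$, $k\ge k_0$, generate the unit ideal of $\He$, and for that you need Macdonald's formula. So your proof is self-contained modulo Macdonald, whereas the paper's is self-contained modulo Borel; the latter is the cheaper citation, which is presumably why the authors chose it.

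One point in your final step is stated too loosely. Writing $\cS(T_{k\lambda})(s)=\sum_{w}p_w(k)z_w^{k}$ for $k\ge 1$ with $z_w=q^{\langle\lambda,\rho\rangle}s^{w\lambda}\neq 0$, vanishing for all large $k$ does force this exponential polynomial $E$ to vanish identically; but the contradiction is not literally with $\cS(T_0)(s)=1$, because the volume factor $\vol(K(k\lambda)(\varpi)K)$ in Macdonald's formula is not continuous at $k=0$ (equivalently, the numerator of your rational function need not have degree smaller than the denominator, so the backward recurrence only reaches $k=1$). The correct endpoint is $E(0)=\sum_w p_w(0)$, which equals the positive constant $\sum_{w\in W}q^{-\ell(w)}$ by the Gindikin--Karpelevich identity $\sum_w c(ws)=Q(q^{-1})$; that is the nonvanishing you actually need. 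This is repairable in a sentence, but as written the phrase ``contradicting $\cS(T_0)(s)=1$'' papers over the one place where the argument could silently fail.
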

\begin{proof}
For $I\subset G$ the Iwahori subgroup, Borel (see \cite{borel}) proved that $V^I \hookrightarrow V_N^{I_M}$.  As the Jacquet functor is intertwining for the action of $P$, the image of $V^K$ is clearly fixed by $K_M$.  Since $V^K\subset V^I$, this gives the desired result.
\end{proof}

Let $G$  be a split reductive group defined over $F$. If  $\chi: G \rightarrow \GL_1$ is a character then 
$\chi^* : \mathbb C^{\times}  \rightarrow \wh G$ will denote the corresponding co-character.  
 The group  $\chi^*(\mathbb C^{\times})$ is in the center of 
$\wh G$.  For example, if $G=\GL_n$ and $\chi=\det$, then $\chi^*(z)$ is the scalar matrix in $\wh G=\GL_n(\mathbb C)$.

\begin{lemma}\label{lem:firstmain}  Let $\chi: G \rightarrow \GL_1$ be a character.   Let $V$ be
a finite dimensional irreducible representation of $\wh G$ (i.e.\ a Hecke operator for $G$). Let $m$ be a half integer. 
Then $\chi^*(q^m)$ acts on $V$ as $q^n$ for some half integer $n$. Let $\pi$ be a representation of $G$. 
Then $V$ acts on $\pi \otimes |\chi|^m$ as $q^n\cdot V$ acts on $\pi$. 

\end{lemma}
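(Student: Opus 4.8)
The plan is to reduce both statements to one structural fact: since $\chi$ factors through the abelianization $G/[G,G]$, its restriction to $T$ gives $\chi\in X^\bullet(T)$ orthogonal to every coroot, so under $X^\bullet(T)=X_\bullet(\wh T)$ the cocharacter $\chi^*$ is central in $\wh G$ and pairs to zero with every root of $\wh G$. Write $V=V_\lambda$ with highest weight $\lambda\in\Lambda_c^+$. For the first assertion: $\chi^*(\C^\times)\subseteq Z(\wh G)$ acts on a weight space $V_\lambda[\mu]$ by the power map $z\mapsto z^{\langle\mu,\chi\rangle}$, and every weight $\mu$ of $V_\lambda$ differs from $\lambda$ by a sum of positive roots of $\wh G$, equivalently of positive coroots of $G$; orthogonality of $\chi$ to the coroots then forces $\langle\mu,\chi\rangle=\langle\lambda,\chi\rangle$ for all such $\mu$. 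Hence $\chi^*(z)$ acts on all of $V$ as $z^{\langle\lambda,\chi\rangle}$, so $\chi^*(q^m)$ acts as $q^n$ with $n:=m\langle\lambda,\chi\rangle$, a half-integer since $m\in\tfrac12\Z$ and $\langle\lambda,\chi\rangle\in\Z$.

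For the second assertion I would work on the Hecke side. Since $\chi(K)$ is a compact subgroup of $F^\times$ it lies in $\cO^\times$, so $|\chi|^m$ is trivial on $K$; in particular $(\pi\otimes|\chi|^m)^K=\pi^K$. Let $\tau\colon\He\to\He$ be defined by $(\tau f)(g)=|\chi(g)|^m f(g)$; it is a well-defined algebra automorphism because $|\chi|^m$ is a $K$-biinvariant character, and on the standard basis $\tau(T_\nu)=q^{m\langle\nu,\chi\rangle}T_\nu$, as $|\chi|^m$ is constant on $K\nu(\varpi)K$ with value $|\varpi^{\langle\nu,\chi\rangle}|^m=q^{m\langle\nu,\chi\rangle}$ (using $|\varpi|=q$, $\chi|_K=1$, and $\chi(\nu(\varpi))=\varpi^{\langle\nu,\chi\rangle}$). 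Unwinding the integral defining the Hecke action shows immediately that, for $v\in\pi^K=(\pi\otimes|\chi|^m)^K$ and any $T\in\He$,
\[
 T\ast_{\pi\otimes|\chi|^m}v=(\tau T)\ast_\pi v .
\]

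It therefore suffices to check $\tau T=q^n T$ for $T=\cS^{-1}(V_\lambda)$. Writing $T=\sum_\nu a_\nu T_\nu$, the triangularity of the Satake isomorphism (see \cite{gross}) guarantees that every $\nu$ that occurs satisfies $\lambda-\nu\in\sum_\alpha\Z_{\ge0}\,\alpha^\vee$, whence $\langle\nu,\chi\rangle=\langle\lambda,\chi\rangle$ by orthogonality; thus $\tau T=\sum_\nu a_\nu q^{m\langle\nu,\chi\rangle}T_\nu=q^n T$ with the same $n$ as in the first part, and $T\ast_{\pi\otimes|\chi|^m}v=q^n(T\ast_\pi v)$. (For $\pi$ unramified irreducible this is just the shift $\Tr_V(s_\pi)\mapsto\Tr_V(s_\pi\chi^*(q^m))=q^n\Tr_V(s_\pi)$ of Satake eigenvalues, but the computation above needs no hypothesis on $\pi$.)

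The only real obstacle I anticipate is bookkeeping: keeping the conventions $|\varpi|=q$ and the pairing $\langle\,\cdot\,,\,\cdot\,\rangle$ (as normalized just before Proposition~\ref{prop:doublecosets}) consistent, and checking that the half-integer $n=m\langle\lambda,\chi\rangle$ from the first part is literally the one controlling the twist in the second --- which it is, because the identification $X^\bullet(T)\cong X_\bullet(\wh T)$ carrying $\chi$ to $\chi^*$ sends coroots of $G$ to roots of $\wh G$ and intertwines the two natural pairings.
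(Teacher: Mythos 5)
The paper states Lemma~\ref{lem:firstmain} without any proof, so there is no ``paper argument'' to compare against; your write-up supplies the missing details and is correct. Both halves check out: $\chi$ kills the coroots of $G$ because it is trivial on $[G,G]$, so $\chi^*$ is central and acts on the irreducible $V_\lambda$ by the single scalar $z^{\langle\lambda,\chi\rangle}$; and on the Hecke side the twist by $|\chi|^m$ rescales $T_\nu$ by $q^{m\langle\nu,\chi\rangle}$, which is constant (equal to $q^n$) on the support of $\cS^{-1}(V_\lambda)$ by the triangularity of the Satake transform together with the same orthogonality. The only blemish is the final parenthetical: it is the identification $X_\bullet(T)\cong X^\bullet(\wh T)$ (not $X^\bullet(T)\cong X_\bullet(\wh T)$) that carries coroots of $G$ to roots of $\wh G$; this does not affect the argument.
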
 

\begin{lemma}\label{lem:secondmain}

Let $\pi$ be a representation of $G=G'\times G''$ obtained as the pullback of $\pi'$, a representation of $G'$. 
 Let $s\in \wh G''$ be the image of $\smatr{q^{1/2}}{0}{0}{q^{-1/2}}$ under the
 principal $\SL_2\to \wh G''$.  For $V$ a finite dimensional representation of $\wh G$ (i.e.\ a Hecke operator for $G$), write $V=\sum V' \otimes V''$, the restriction of $V$ to $\wh G' \times\wh G''$.  Then $V$ acts on $\pi$ as $\sum \Tr_{V''}(s)V'$ acts on $\pi'$. 
\end{lemma}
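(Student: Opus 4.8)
\emph{Proof proposal.} The plan is to reduce everything to the fact that, for $G=G'\times G''$ with $K=K'\times K''$, both the Hecke algebra and the Satake isomorphism factor as tensor products: $\He_G=\He_{G'}\otimes\He_{G''}$ (Cartan decomposition for a product), and $\cS_G=\cS_{G'}\otimes\cS_{G''}$, matching $R(\wh G)=R(\wh G')\otimes R(\wh G'')$. In particular, if $V=V'\otimes V''$ is a pure tensor then the Hecke operator attached to it is the function $(g',g'')\mapsto f_{V'}(g')f_{V''}(g'')$ on $G$, where $f_{V'}\in\He_{G'}$ and $f_{V''}\in\He_{G''}$ are attached to $V'$ and $V''$. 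Since the Hecke action is linear in $V$, it suffices to treat $V=V'\otimes V''$ and then sum over the decomposition $V=\sum V'\otimes V''$.

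First I would compute the action of $f_{V'}\otimes f_{V''}$ on $\pi$ directly. Since $\pi$ is pulled back along the projection $G\to G'$, one has $\pi(g',g'')=\pi'(g')$; normalizing Haar measures so that $\vol(K')=\vol(K'')=1$, a one-line application of Fubini gives
\[
 \pi(f_{V'}\otimes f_{V''}) = c\cdot\pi'(f_{V'}), \qquad c:=\int_{G''}f_{V''}(g'')\,dg'',
\]
and $c$ is by definition the scalar by which the Hecke operator $f_{V''}$ acts on the trivial representation $\mathbf 1$ of $G''$. The point of arguing this way, rather than via Satake parameters, is that $\pi'$ is allowed to be completely arbitrary: only the triviality of the $G''$-action is used.

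It then remains to show $c=\Tr_{V''}(s)$. The representation $\mathbf 1$ of $G''$ is unramified, embedding into the normalized principal series $i_{B''}^{G''}(\delta_{N''}^{-1/2})$; by \eqref{eq:modchar} and the standard dictionary between unramified characters of the maximal torus of $G''$ and semisimple classes in $\wh G''$, its Satake parameter is the class of $s$ — indeed the cocharacter $t\mapsto\smatr{t}{0}{0}{t^{-1}}$ of the principal $\SL_2\to\wh G''$ is the sum of the positive coroots of $\wh G''$, i.e.\ the sum of the positive roots of $G''$, which pairs with $X^\bullet(\wh T'')$ exactly as $2\rho$ of $G''$ does, so that $\delta_{N''}^{-1/2}$ corresponds to $s^{-1}$ and the sign ambiguity $s^{\pm1}$ is immaterial since $s$ is Weyl-conjugate to $s^{-1}$. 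By the defining property of the Satake isomorphism (cf.\ \cite{gross}), the Hecke operator attached to a representation $V''$ of $\wh G''$ acts on an unramified representation with Satake parameter $[t]$ by the scalar $\Tr_{V''}(t)$; hence $c=\Tr_{V''}(s)$. Combining the three steps and summing over $V=\sum V'\otimes V''$ proves the lemma. The only real work is the bookkeeping in this last paragraph — matching the normalizations of $\delta^{1/2}$ and orienting the torus/dual-torus identification correctly — but nothing deeper is involved; this lemma is elementary and is recorded only because it is invoked repeatedly in the proof of Theorem~\ref{theorem}.
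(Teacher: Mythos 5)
Your proposal is correct and is essentially the paper's own argument written out in full: the paper's proof is the single observation that the Satake parameter of the trivial representation of $G''$ is $s$, and your factorization $\He_G=\He_{G'}\otimes\He_{G''}$ plus the Fubini computation is exactly the bookkeeping that makes that observation yield the lemma.
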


\begin{proof}
 Since the Satake parameter of the trivial representation of a group $G''$ is $s$, 
  the result is clear.  
 \end{proof} 

\begin{lemma}\label{lem:thirdmain} 
 Let $\chi: G \rightarrow \mathbb \GL_1$ be a character, and let $C=\chi^*(\C^{\times}) \subseteq \wh G$. 
 Let $\pi'$ be a representation of $\GL_1$. Then $\pi=\pi'\circ \chi$ is a representation of $G$. 
Let $s\in \wh G$ be the image of $\smatr{q^{1/2}}{0}{0}{q^{-1/2}}$ under the principal $\SL_2\to \wh G$.
For $V$ a finite dimensional representation of $\wh G$ (i.e.\ a Hecke operator for $G$), write $V=\sum V' \otimes V''$, the restriction of $V$ to $C\times\SL_2$.  Then $V$ acts on $\pi$ as $\sum \Tr_{V''}(s)V'$ acts on $\pi'$.
\end{lemma}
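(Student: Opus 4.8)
The plan is to reduce the statement — exactly as in the proof of Lemma~\ref{lem:secondmain} — to an identification of Satake parameters; the only genuinely new feature is that $\chi$ need not exhibit $G$ as a direct product.

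First I would dispose of the ramified case. If $\pi'$ has no $\cO^\times$-fixed vector, then $\pi=\pi'\circ\chi$ has no $K$-fixed vector and both sides act on the zero space, so there is nothing to prove; hence it suffices to treat the case where $\pi'$ is an unramified character of $\GL_1=F^\times$, say with $\pi'(\varpi)=t$, whose Satake parameter is then $t\in\C^\times=\wh{\GL_1}$. In this case $\eta:=\pi'\circ\chi$ is an unramified character of $G$ that is trivial on $K$ and satisfies $\eta(\lambda(\varpi))=t^{\langle\chi,\lambda\rangle}=\lambda(\chi^*(t))$ for every $\lambda\in X_\bullet(T)$, so that $\pi=\eta=\mathbf{1}_G\otimes\eta$ as a representation of $G$.

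Next I would compute the Satake parameter of $\pi$. The Satake parameter of the trivial representation $\mathbf{1}_G$ of the split group $G$ is the class of $s$; this is the fact already used in the proof of Lemma~\ref{lem:secondmain} (see \cite{gross}). Twisting an unramified representation of $G$ by an unramified character multiplies its Satake parameter by the corresponding element of the centre of $\wh G$, and by the displayed identity $\eta(\lambda(\varpi))=\lambda(\chi^*(t))$ that element is $\chi^*(t)\in C$. Hence the Satake parameter of $\pi=\mathbf{1}_G\otimes\eta$ is the class of $\chi^*(t)\cdot s$. Then I would assemble the trace: since $C=\chi^*(\C^\times)$ is central in $\wh G$, the elements $\chi^*(t)$ and $s$ commute and $\chi^*(t)\cdot s$ is the image of $\bigl(\chi^*(t),\smatr{q^{1/2}}{0}{0}{q^{-1/2}}\bigr)$ under the natural map $C\times\SL_2\to\wh G$. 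Writing $V|_{C\times\SL_2}=\sum V'\otimes V''$, the Hecke operator $V$ acts on the line $\pi^K$ by the scalar
\[
\Tr_V\bigl(\chi^*(t)\cdot s\bigr)=\sum_{V'}\Tr_{V'}(\chi^*(t))\,\Tr_{V''}(s).
\]
On the other side, pulled back along $\chi^*\colon\C^\times=\wh{\GL_1}\to C$ each $V'$ is a Hecke operator for $\GL_1$, and it acts on $\pi'$ (Satake parameter $t$) by the scalar $\Tr_{V'}(\chi^*(t))$; hence $\sum\Tr_{V''}(s)V'$ acts on $\pi'$ by $\sum_{V'}\Tr_{V''}(s)\Tr_{V'}(\chi^*(t))$. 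Comparing the two expressions proves the lemma.

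The step I expect to be the main obstacle is the identification of the Satake parameter of $\pi$: pinning down the normalization so that the ``trivial-representation direction'' contributes precisely the factor $s$ — this is exactly where the lemma goes beyond Lemma~\ref{lem:secondmain}, since $\chi$ does not realize $G$ as a product — together with the bookkeeping of the identifications $X^\bullet(T)=X_\bullet(\wh T)$ that relate $\chi$, $\chi^*$, and the central torus of $\wh G$. Everything afterwards is the elementary identity $\Tr_{V'\otimes V''}(c,g)=\Tr_{V'}(c)\,\Tr_{V''}(g)$.
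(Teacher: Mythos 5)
Your proof is correct, but it takes a genuinely different route from the paper's. The paper proves the lemma only in the case it actually uses, namely $G=\GL_n$ with $\chi=\det$: since both sides of the identity are algebra homomorphisms out of $R(\wh G)$, it suffices to check the fundamental representations $V_i=\wedge^i\C^n$, which are minuscule; for these $\cS(T_{\lambda_i})=q^{i(n-i)/2}V_i$, the double coset $K\lambda_i(\varpi)K$ acts on $\pi=\pi'\circ\det$ through the single value $\pi'(\varpi)^i$, and the required identity collapses to the formula $q^{i(i-n)/2}\vol(K\lambda_i(\varpi)K)=\Tr_{V_i}(s)$ from \cite{gross} for minuscule coweights. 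You instead identify the Satake parameter of $\pi'\circ\chi$ (an unramified twist of the trivial representation) as $\chi^*(t)\cdot s$ and take traces; this is closer in spirit to the proof of Lemma~\ref{lem:secondmain}, works for arbitrary $(G,\chi)$ as in the statement rather than just $(\GL_n,\det)$, and trades the explicit volume computation for the compatibility of Satake parameters with unramified twists (a fact the paper also relies on, in Lemma~\ref{lem:firstmain}). One small repair is needed in your opening reduction: dismissing the case $(\pi')^{\cO^\times}=0$ does not by itself reduce to unramified characters, since a general smooth $\pi'$ with $\cO^\times$-fixed vectors --- e.g.\ the regular representation $C_c^\infty(\GL_1)$, which is exactly the case arising in the applications --- is not a direct sum of characters. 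But both sides of the identity act on $(\pi')^{\cO^\times}$ through fixed elements of $\He_{\GL_1}\cong\C[z,z^{-1}]$, and a Laurent polynomial is determined by its values at all $t\in\C^\times$, i.e.\ by its action on all unramified characters; with that observation your reduction, and hence the whole argument, is complete.
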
 

\begin{proof} 
We prove this for  $G=\GL_n$ and $\chi=\det$ is the determinant. In this case $C$ is the center. 
 It suffices to prove the statement for the fundamental representations $V_{\lambda_i}=V_i=\wedge^i \C^n$ where 
 $$ \lambda_i=(\underbrace{1,\ldots,1}_{i\mbox{ \scriptsize times}},0,\ldots,0). $$
Since $V_i$ is miniscule, the Satake isomorphism gives $\cS(T_{\lambda_i})=q^{i(n-i)/2}V_i$.  Therefore, the action of $\pi(V_i)$ is 
\begin{align*}
 q^{i(i-n)/2}\int_{K\lambda_i(\varpi)K} \pi(g) dg
 = & q^{i(i-n)/2}\vol(K\lambda_i(\varpi)K)\pi'(\det(\lambda_i(\varpi))) \\
 = & q^{i(i-n)/2}\vol(K\lambda_i(\varpi)K) \pi'(\varpi)^i.
\end{align*}
The center $C$ acts on $V_i$ by the character $z\mapsto z^i$.  Note that
 $$ s = \( \begin{smallmatrix} q^{(n-1)/2} & & & \\ & q^{(n-3)/2} & & \\ & & \ddots & \\ & & & q^{(1-n)/2} \end{smallmatrix}\), $$
is the image of $\smatr{q^{1/2}}{}{}{q^{-1/2}}$ under the principal $\SL_2 \to \wh G$.  So, to complete the proof we just need to show that 
 $$ q^{i(i-n)/2}\vol(K\lambda_i(\varpi)K) = \Tr_{V_i}(s). $$
This is immediate from the discussion in \cite[Section~3]{gross} and the fact that $V_i$ is miniscule.
\end{proof}

\begin{lemma}\label{lem:easymatch}
Let $G$ be a reductive group.  Let $C_c^\infty(G)$ denote the space of smooth, compactly supported functions on $G$. This is a $G \times G$ module for the left and right action of $G$ called the \emph{regular representation}.  On $C_c^\infty(G)^{K \times K}$ we have a matching of the Hecke algebras for the left and right action.
\end{lemma}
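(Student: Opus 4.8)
The plan is to realize $C_c^\infty(G)$ as a parabolically induced representation, or more precisely to recognize that it carries a large enough stock of intertwining symmetry that the two Hecke actions coincide for purely formal reasons. Concretely, I would identify $C_c^\infty(G)^{K\times K}$ with the Hecke algebra $\He_G$ itself: a $K\times K$-invariant function on $G$ is exactly a $K$-biinvariant compactly supported locally constant function, so $C_c^\infty(G)^{K\times K}=\He_G$ as a vector space, with basis $\{T_\lambda\mid\lambda\in\Lambda_c^+\}$ by Proposition~\ref{prop:doublecosets}. Under this identification, the left action of $T\in\He_G$ is $T\mapsto T* f$ (convolution on the left) and the right action of $T'\in\He_G$ is $f\mapsto f* T'$. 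Since $\He_G$ is commutative (this is the content of the Satake isomorphism: $\He_G\cong R(\wh G)$, a commutative ring), we have $T* f = f* T$ for all $T,f\in\He_G$. Thus the left action of $T$ and the right action of $T$ literally agree on $C_c^\infty(G)^{K\times K}$, which is precisely the asserted matching (here the map $\tilde r$ relating the two Hecke algebras is the identity, since $G$ is being paired with itself).

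In more detail, the key steps in order are: first, spell out the isomorphism $C_c^\infty(G)^{K\times K}\cong\He_G$ and check that under it the two module structures become left- and right-convolution by elements of $\He_G$. Second, invoke commutativity of $\He_G$ — which follows from the Satake isomorphism reviewed in Section~\ref{sec:satake}, or alternatively from Gelfand's trick applied to the anti-involution $f(g)\mapsto f(g^{-1})$, using that $K\lambda(\varpi)K$ is stable under $g\mapsto g^{-1}$. Third, conclude that left-convolution by $T$ equals right-convolution by the same $T$, which is exactly a matching of Hecke operators on $C_c^\infty(G)^{K\times K}$ in the sense defined after the statement of Theorem~\ref{theorem}.

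I do not expect a genuine obstacle here; the lemma is essentially a restatement of the commutativity of the spherical Hecke algebra, packaged in the language of the regular representation so that it can be fed into the inductive machinery for Theorem~\ref{theorem}. The only point that requires a word of care is making sure the normalization of Haar measure ($\vol(K)=1$) is consistent on both sides so that convolution is genuinely associative and the identifications are literal equalities rather than equalities up to a scalar; with the conventions fixed in Section~\ref{sec:satake} this is automatic.
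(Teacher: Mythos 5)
Your overall strategy coincides with the paper's: identify $C_c^\infty(G)^{K\times K}$ with $\He_G$ itself and exploit the algebra structure (commutativity is indeed the implicit input). But the punchline contains a genuine error. With the paper's convention $T*v=\int_G T(g)\sigma(g)v\,dg$, the left regular action $\lambda_g f(x)=f(g^{-1}x)$ makes $T$ act by left convolution, $f\mapsto T*f$; the right regular action $\rho_g f(x)=f(xg)$ makes $T$ act by $f\mapsto\int_G T(g)f(xg)\,dg=f*T^*$, where $T^*(x)=T(x^{-1})$ --- \emph{not} by $f\mapsto f*T$. (Test on $G=\GL_1$: the left action of $T_\varpi$ sends the characteristic function of $\cO^\times$ to that of $\varpi\cO^\times$, while the right action of $T_\varpi$ sends it to that of $\varpi^{-1}\cO^\times$; these differ.) Commutativity therefore shows that the left action of $T$ agrees with the \emph{right} action of $T^*$, so the matching map is $T\mapsto T^*$, i.e.\ $V_\lambda\mapsto V_{-w_0\lambda}=\wt V_\lambda$ on representations, and not the identity as you assert. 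This is precisely the content of the remark following the lemma, and it is not a cosmetic point: in the $E_6$ and $E_7$ cases the lemma is applied with dual groups $\GL_2(\C)$ and $\GL_3(\C)$, which have non-self-dual representations, and the paper explicitly invokes the remark there.

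A secondary error: $K\lambda(\varpi)K$ is \emph{not} in general stable under $g\mapsto g^{-1}$; its inverse is $K(-w_0\lambda)(\varpi)K$, a different double coset unless $V_\lambda$ is self-dual (already for $\GL_3$ and $\lambda=(1,0,0)$ this fails). So that form of Gelfand's trick does not establish commutativity; for $\GL_n$ one uses transpose, and in general one falls back on the Satake isomorphism, which you also cite and which suffices. The repair to your argument is therefore small but essential: replace ``the right action of $T$'' by ``the right action of $T^*$'' throughout, and record that the induced matching of virtual representations is $V\mapsto\wt V$ rather than the identity.
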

\begin{proof}
This is obvious since $C_c^\infty(G)^{K \times K}$ is nothing else  but the Hecke algebra itself. To be precise, since the left action on $f$ in $C_c^\infty(G)$ is by $\lambda_g (f)(x)=f(g^{-1}x)$, a Hecke operator $R$, acting from the right, is matched with $R^*$ defined by $R^*(x)=R(x^{-1})$.
\end{proof}

\begin{remark}
Notice that this matching of Hecke operators, when considered as a matching of virtual representations, matches $V\in R(G)$ with its dual $\wt{V}$.  In particular, if $V$ is self-dual then it is matched with itself.
\end{remark}

\section{Our groups}\label{sec:setup}

\subsection{Octonions}

Let $\Oct$ denote the non-associative division algebra of rank $8$ over $F$.  There is an $F$-linear anti-involution $x\mapsto \bar{x}$ on $\Oct$, hence norm and trace maps
 $$ \N:\Oct\to F\quad x\mapsto x\bar{x} = \bar{x}x,\qquad \Tr:\Oct\to F,\quad x\mapsto x+\bar{x} $$
satisfying
 $$ \N(x\cdot y) = \N(x)\N(y),\qquad \Tr(x\cdot y) = \Tr(y\cdot x), \quad \Tr(x\cdot(y\cdot z)) = \Tr((x\cdot y)\cdot z). $$
On the set $\Oct^0$ of trace zero elements, we have $\bar{x}=-x$.  The group $G_2$ is the automorphism group of $\Oct$.

The quadratic form $\N:\Oct\to F$ has signature $(4,4)$, which means that $\Oct$ has a basis $\{1,i,j,k,l,li,lj,lk\}$.  (Note that $l^2=1$.)  The following basis is particularly useful.
\begin{equation}\label{eq:Octbasis}
\begin{array}{c} s_1 = \frac{1}{2}(i+li),\quad s_2 = \frac{1}{2}(j+lj),\quad s_3 = \frac{1}{2}(k+lk),\quad s_4 = \frac{1}{2}(1+l), \\
 t_1 = \frac{1}{2}(i-li),\quad t_2 = \frac{1}{2}(j-lj),\quad t_3 = \frac{1}{2}(k-lk),\quad t_4 = \frac{1}{2}(1-l). \end{array}
\end{equation}
The multiplication table for this basis is given in Table~\ref{table}.
\begin{table}[h] 
$$\begin{array}{||c||c|c|c||c|c|c||c|c|} \hline
 & s_1 & s_2 & s_3 & t_1 & t_2 & t_3 & s_4 & t_4 \\ \hline \hline
s_1 & 0 & -t_3 & t_2 & s_4 & 0 & 0 & 0 & s_1 \\ \hline
s_2 & t_3 & 0 & -t_1 & 0 & s_4 & 0 & 0 & s_2 \\ \hline
s_3 & -t_2 & t_1 & 0 & 0 & 0 & s_4 & 0 & s_3 \\ \hline \hline
t_1 & t_4 & 0 & 0 & 0 & s_3 & -s_2 & t_1 & 0 \\ \hline
t_2 & 0 & t_4 & 0 & -s_3 & 0 & s_1 & t_2 & 0 \\ \hline
t_3 & 0 & 0 & t_4 & s_2 & -s_1 & 0 & t_3 & 0 \\ \hline \hline
s_4 & s_1 & s_2 & s_3 & 0 & 0 & 0 & s_4 & 0 \\ \hline
t_4 & 0 & 0 & 0 & t_1 & t_2 & t_3 & 0 & t_4 \\ \hline
\end{array}$$
\caption{Multiplication Table for Octonions}
\label{table}
\end{table}

\begin{remark}
From this basis it is evident that a subspace $V\subset \Oct^0$ on which multiplication is trivial is at most 2-dimensional.  (We call such a subspace a \emph{null space} or a \emph{null subspace}.)  Indeed, let $\{i,j,k\}=\{1,2,3\}$.  Then from the multiplication table we see that $s_i^\perp = \langle s_i,t_j,t_k\rangle$, and the null spaces of $\Oct^0$ which contain $s_i$ are all of the form $\langle s_i,at_j+bt_k\rangle$ for fixed $a,b\in F$.  Since $G_2$ acts transitively on (nonzero) elements of trace zero and norm zero, this phenomenon is generic.
\end{remark}

\subsubsection{Maximal parabolic subgroups in $G_2$.}  \label{g2_parabolics} 
They are  described as the stabilizers of null subspaces $V\subset \Oct^0$.  
Let $V_1$ be spanned by $s_1$ and $V_2$ by $s_1$ and $t_2$. Then $V_3=V_1^{\perp}$ is spanned by $s_1,t_2$ and $t_3$. Let 
$P_1=M_1 N_1$ and $P_2=M_2 N_2$ be the stabilizers of $V_1$ and $V_2$, respectively. The Levi factor $M_2$ acts on $V_2$. 
The choice of the basis in $V_2$ gives an isomorphism  $M_2\cong \GL_2$.  The Levi factor $M_2$ acts on $V_3/V_1$ and we have 
an isomorphism $M_1\cong \GL_2$. It is not difficult to see that $g\in \GL_2 \cong M_1$ acts on $V_1$ by $\det(g)$.  The set of all 
$g$ in $G_2$ such that all $s_i$ and $t_i$ are eigenvectors is a maximal split torus $T$ in $G_2$. The modular characters are 
\begin{equation}\label{eq:g2_rho} 
\delta_{N_1}(g)=|\det(g)|^5 \text{ and } \delta_{N_2}(g)=|\det(g)|^3. 
\end{equation}

The stabilizer of $s_4-t_4$ in $G_2$ is a group isomorphic to $\SL_3$. Under the action of this group we have a decomposition 
 \[
 \Oct^0=\langle s_1,s_2,s_3\rangle \oplus \langle t_1,t_2,t_3\rangle\oplus \langle s_4-t_4  \rangle.
 \]   
 We can identify the stabilizer of $s_4-t_4$ with $\SL_3$ so that the action on $\langle s_1,s_2,s_3\rangle$ is standard. The torus 
 $T$ of $G_2$ sits in $\SL_3$. In this way, we can represent elements in $T$ by $3\times 3$ matrices. For example, if 
 $\alpha_l$ is a long root and $\alpha_s$ a short root perpendicular to $\alpha_l$ then, up to Weyl group conjugation,  
 \begin{equation}\label{eq:coroots} 
 \alpha_l^*(t) = \left(\begin{array}{ccc} 
  t &  & \\
   & 1 & \\
   & & t^{-1} \end{array}\right) 
\text{ and } 
\alpha_s^*(t) = \left(\begin{array}{ccc} 
  t &  & \\
   & t^{-2} & \\
   & & t \end{array}\right). 
 \end{equation}

\subsection{Description of groups}

 Let $P=MN$ be a maximal parabolic subgroup of $\bfG$ as in the table below. 
 The group $N$ is abelian, except in the case $E_8$, where $N$ has 
one-dimensional center $Z$.  In order to give a uniform notation, let $Z$ be trivial if $N$ is abelian.  Let $d$ denote the dimension of $N/Z$.
Let $C\cong \mathbb \GL_1$ be the center of $M$.  Fix an isomorphism 
 $\lambda_{*}:  \GL_1 \rightarrow C$ 
such that the adjoint action of $\lambda_{\ast}(z)$ on $N/Z$ is given by multiplication by $z$. 
We have a dual pair $G_2 \times H\subset \bfG$ 
such that $Q=LU = H\cap P$ is a maximal parabolic of $H$. 
Let $N_0\subseteq N/Z$ be the complement of $\bar U/\bar Z$ under the invariant pairing induced by the Killing form. 
We fix an isomorphism of $L$ with a classical group so that the action of $G_2 \times L$ on $N_0$ is isomorphic to 
$\mathbb O_0 \otimes F^n$, the space of $n$-tuples in $\mathbb O_0$, and $h\in L$ acts  on 
an $n$-tuple $(x_1, \ldots , x_n)$ by $(x_1, \ldots , x_n)h^{-1}$.  Since the scalar matrix $z^{-1}$ in $L$ acts on 
$N/Z$ as $z$,  the center of $L$ coincides with the center of $M$. In the case of $\bfG=E_8$, let $i$ be the  
isogeny character of $\GSp_6$. 
 
\begin{table}[h]
$$
\begin{array}{|c||c|c|c|c|} \hline
\bf G & D_5 & E_6 & E_7 & E_8  \\ \hline 
M & D_4 & D_5 & E_6 & E_7 \\ \hline
d & 8 & 16 & 27 & 56 \\ \hline
L & \GL_1 & \GL_{2} & \GL_{3} & \GSp_{6} \\ \hline
\delta_{\bar U} & |\det| & |\det| & |\det|^2 & |i|^8 \\ \hline 
\delta_{\bar N} & |\det|^8 & |\det|^8 & |\det|^9 & |i|^{29} \\ \hline 
\end{array}
$$
\caption{Maximal parabolic subgroups}
\label{table:groupdata}
\end{table}

The last row of the table is the restriction of the character $\delta_{\bar N}$ to $L$. 
The group $\GSp_6$ acts by the isogeny character $i$ on $\bar Z$.   

\subsubsection{Maximal parabolic subgroups in $L$.} \label{l_parabolics} 

 Assume first that $L=\GL_n$. 
Recall that $g\in \GL_n$ acts on $F^n$, the space of $n$-tuples $(x_1, \ldots , x_n)$ by $(x_1, \ldots , x_n)g^{-1}$. 
For $m\leq n$ let  $Q_m$ be the stabilizer of the subspace consisting of the $n$-tuples whose last $n-m$ entries 
are 0.  We have a Levi decomposition $Q_m=L_m U_m$ where  $L_m=\GL_m \times\GL_{n-m}$ is the stabilizer, in $Q_m$,
of  the subspace consisting of the $n$-tuples whose first $m$ entries are 0.
   Let $g=(g_1,g_2) \in \GL_{n-m} \times\GL_m$. The modular character 
$\delta_{U_m}$ is 
\begin{equation}\label{eq:gl_rho} 
\delta_{U_m}(g)=|\det(g_1)|^{m-n}\cdot |\det(g_2)|^{m}. 
\end{equation} 

 Assume now that $L=\GSp_{2n}$.   This is a group of isogenies of a symplectic 
form $(\cdot,\cdot)$ on a $2n$ dimensional space. 
  Let $e_1, \ldots,  e_n, f_1, \ldots , f_n$ be a symplectic basis, i.e. 
\[ 
(e_i,f_j)=-(f_j,e_i)=\delta_{ij} \text{ and } (e_i,e_j)=(f_i,f_j)=0. 
\] 
Using this basis we identify the symplectic space with $F^{2n}$, the space of $2n$-tuples. 
We identify $\GSp_{2n}$ with the group of $2n\times 2n$ matrices  $g$ acting on $F^{2n}$ from the right, 
and preserving the symplectic form, up to an isogeny character : 
\[ 
(vg^{-1},ug^{-1})= i(g)^{-1}(v,u).
\] 

For every $m\leq n$, let $Q_m$ be the subgroup of $\GSp_{2n}$ preserving the subspace spanned by 
$e_1, \ldots, e_m$.  We have a Levi decomposition $Q_m= L_m U_m$ where $L_m$ is the stabilizer, in $Q_m$ of the 
subspace spanned by $f_1, \ldots, f_m$.  
Then $Q_m\cong \GL_m \times \GSp_{2(n-m)}$ where $g=(g_1,g_2) \in \GL_m \times \GSp_{2(n-m)}$
acts as follows: $e_i\mapsto e_i g_1^{-1}$, for $1\leq i \leq m$, $f_i\mapsto i(g_2)^{-1}f_i g_1^{\top}$ for $1\leq i \leq m$  (here $g_1^{\top}$ is the 
transpose of $g_1$) and as 
$g_2^{-1}$ on the remaining $2(n-m)$ basis elements. 
The modular character $\delta_{U_m}$ is 
\begin{equation} \label{eq:sp_rho}
\delta_{U_m}(g)=|\det(g_1)|^{-(2n-m+1)} |i(g_2)|^{\frac{m(2n-m+1)}{2}}. 
\end{equation}

\subsubsection{Example: $\bfG=E_7$}\label{example} 

Let $J_{27}$ be the $27$-dimensional Jordan algebra over $F$ given by 
\begin{equation}\label{eq:Jalg}
 J_{27} = \left\{ \left.A=\(\begin{array}{ccc} a & z & \bar{y} \\ \bar{z} & b & x \\ y & \bar{x} & c \end{array}\) \right\rvert a,b,c\in F,\quad x,y,z\in \Oct\right\}. 
\end{equation}
The determinant on $J_{27}$ gives an $F$-valued cubic form on $J_{27}$. 
The adjoint group $\bfG$ has a maximal parabolic $P=MN$ such that 
\begin{equation}\label{eq:E6nonadjoint}
 M\cong\{g\in \GL(J_{27})\mid \det(g(A))=\lambda(g)\det(A)\mbox{ for some similitude }\lambda(g)\in F^\times\}, 
\end{equation}
 a reductive group of type $E_6$, and  $N\cong J_{27}$ as $M$-modules.  Moreover, $\bar N \cong  J_{27}$, 
 and the natural pairing of $\bar N$ and $N$ can be identified with the trace on $J_{27}$.

 Evidently, $G_2\subset M$ acts term by term on the elements of $A$, and since $g\in \GL_3$ acts via
\begin{equation}\label{eq:gl3action}
 g \cdot A =( \det{g})^{-1} gAg^{t},
\end{equation}
the action of $G_2$ and $\GL_3$ obviously commute, hence $G_2\times \GL_3\subset M$. As described in \cite[Section~5]{ms}, 
$G_2\times \PGSp_6\subset \bfG$ is a dual pair, and $Q=LU=\PGSp_6\cap P$ where $U$ can be identified with $J_6$, 
the subalgebra of $J_{27}$ consisting of (symmetric) matrices with entries in the field $F$, and $L\simeq \GL_3$ with action on $J_{27}\simeq N$ given by \eqref{eq:gl3action}.  Thus, the orthogonal complement $N_0$  of $\bar U$ in $N$ is identified with the subspace 
of $J_{27}$ consisting of matrices with 0 on the diagonal and traceless octonions off the diagonal, that is, 
 $N_0$ is identified with the set of triples $(x,y,z)$ of traceless octonions. Moreover, 
 $(g, h) \in G_2 \times \GL_3$ acts on $(x,y,z)$  by $(gx,gy,gz) h^{-1}$.  (The action of $h$ follows from Cramer's rule.)

\section{The proof}\label{sec:general}

\subsection{The base case}\label{sec:basecase}

Let $\bfV$ be the minimal representation of $D_4$. Let $G'= S_3$ be the group of permutations of 3 letters. Then $S_3$ acts on $D_4$, by outer automorphisms, fixing $G_2$. Since $\bfV$ can be extended to a representation of a semi-direct product of $D_4$ and $S_3$, we have a dual pair $G\times G'= G_2\times S_3$ acting on $\bfV$.  We let $\wh{G}'=S_3$ and 
 $$ r :S_3 \rightarrow G_2(\mathbb C) $$
such that the centralizer $C$ of $r(S_3)$ in $G_2(\C)$ is $\SO(3) \subset \SL_3(\C) \subset G_2(\C)$.  (The group $\SO(3) \simeq \PGL_2(\C)$ corresponds to the subregular unipotent orbit by the Jacobson-Morozov theorem.)

Let $K'=G'$. Then the Hecke algebra $\He'$ is one-dimensional.  With these choices, Theorem~\ref{theorem} asserts that the $S_3$-invariants of the minimal representation of $D_4$ is the unramified representation $\pi_{sr}$ of $G_2$ whose Satake parameter corresponds to the subregular orbit. This is proved in \cite{hms}.

\subsection{The general case}\label{sec:generalcase}

Assume that $\bfG\neq D_4$. Then we have a maximal parabolic $P=MN$ in $\bfG$ and the corresponding maximal 
parabolic $Q=LU$ in $H$ as in Table~\ref{table:groupdata}.
For simplicity, let $K$ be the maximal compact subgroup of $G_2$, and $K'$ that of $H$. Assume that we want to show 
matching of two operators $T$ and $T'$. 
 By Lemma~\ref{lem:borel},
 $$ \bfV^{K\times K'} \hookrightarrow r_{\bar U}(\bfV)^{K\times K'_L} $$
where $K'_L=K'\cap L$.  Thus Theorem~\ref{theorem} holds if we can show matching 
of $T$ and $\mathcal S_L(T')$ on $r_{\bar U}(\bfV)^{K\times K'_L}$.  If $\bfG\neq E_8$, the unnormalized Jacquet functor $\bfV_{\bar U}$ was 
computed in \cite{ms}.   In the context of the present work, we find it convenient to describe these results in terms of maximal parabolic 
subgroups $Q_m=L_mU_m$ of $L\cong \GL_n$  and maximal parabolic subgroups $P_m=M_mN_m$ of $G_2$ 
(as defined in Sections~\ref{l_parabolics} and~\ref{g2_parabolics}, respectively). In particular, we have fixed 
isomorphisms  $L_m\cong \GL_m \times \GL_{n-m}$ and $M_m\cong \GL_2$. 

Let $s,t$ be a pair of real numbers. For $m=1,2$, let
$C_c^{\infty}(\GL_m)[s,t]$ be the vector space $C_c^{\infty}(\GL_m)$ with an $M_m\times L_m$-module structure defined by
 $$ (g_1,g_2)\cdot f(h) = \abs{\det{g_1}}^s\abs{\det{g_2}}^t f(hg_1) $$
for any $(g_1,g_2)\in L_m$, and by  
 \begin{equation}\label{eq:levi_action}
g\cdot f(h) = \left\{\begin{array}{cc} f(\det{g}^{-1} h) & \mbox{ if }m=1, \\  f(g^{-1} h) & \mbox{ if }m=2, \end{array} \right.
\end{equation} 
 for any $g\in M_m$. 

We shall omit $[s,t]$ in the notation if $s=t=0$.
With this notation in hand, we now describe the Jacquet module $r_{\bar{U}}(\bfV)$ for each of our cases.

\subsubsection{$\bfG=D_5$}

\begin{proposition}\label{prop:msD5} As a $G_2\times \GL_1$-module, $r_{\bar U}(\bfV)$ has a filtration with two successive 
sub quotients
\begin{enumerate} 
\item   $i^{G_2}_{P_1}(C_c^\infty(\GL_1))$. 
\item $\bfV(M)\otimes |\det|^{\frac{1}{2}} \oplus 1\otimes |\det|^{\frac{5}{2}}$.
\end{enumerate} 
Here $\bfV(M)$ is the minimal representation of $M/C$. 
\end{proposition}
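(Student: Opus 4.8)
The plan is to obtain the filtration of $r_{\bar U}(\bfV)$ for $\bfG = D_5$ directly from the computation of the (unnormalized) Jacquet module $\bfV_{\bar U}$ carried out in \cite{ms}, and then to translate that answer into the normalized, parabolically-induced language used here. First I would recall from \cite{ms} the structure of $\bfV_{\bar U}$ as an $M \times L$-module, where $M$ is of type $D_4$ and $L \cong \GL_1$; by the analysis in Section~\ref{sec:setup}, the abelian unipotent radical $N/Z = N$ (here $Z$ is trivial since $D_5$ is not $E_8$) carries the action of $G_2 \times L$ on $\Oct_0 \otimes F^n$ with $n = 1$, so $N_0 \cong \Oct_0$, an $8$-dimensional space. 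The Jacquet module with respect to $\bar U$ sees the "big cell" piece, which should be (the smooth functions on) an orbit of $G_2 \times \GL_1$ acting on $\Oct_0$, together with a finite piece supported on the smaller orbits (the null cone and the origin). The generic orbit of $G_2$ on $\Oct_0 \cong \Oct^0$ away from the null cone is the nonzero-norm locus, and the stabilizer story there is controlled by $P_1 = M_1 N_1$, the parabolic of $G_2$ stabilizing the line $V_1 = \langle s_1 \rangle$; this is what will produce the induced module $i^{G_2}_{P_1}(C_c^\infty(\GL_1))$ in part (1).

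Next I would carry out the normalization bookkeeping. The functor $r_{\bar U}$ is normalized by $\delta_{\bar U}^{-1/2}$ (see \eqref{eq:jacaction}), and from Table~\ref{table:groupdata} we have $\delta_{\bar U} = |\det|$ and $\delta_{\bar N} = |\det|^8$ on $L \cong \GL_1$. The two successive subquotients correspond to (i) the open orbit, giving functions on $\GL_1$ induced up along $P_1$ inside $G_2$, and (ii) the closed orbits: the null cone in $\Oct_0$, whose generic point has stabilizer governed by $P_1$ again but now contributing $\bfV(M)$ twisted by a power of $|\det|$, and the origin $\{0\}$, contributing the trivial $G_2$-representation twisted by another power of $|\det|$. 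The precise exponents $\tfrac12$ and $\tfrac52$ in part (2) must be read off by combining: the jump in $\delta_{\bar N}$ versus $\delta_{\bar U}$, the modular character $\delta_{N_1}(g) = |\det g|^5$ of the $G_2$-parabolic $P_1$ from \eqref{eq:g2_rho}, and the weight with which $C = \GL_1$ acts on the relevant graded pieces of $N$ (recall $\lambda_*(z)$ acts by $z$ on $N/Z$, and the null cone sits at a specific "level" in a grading of $\Oct_0$). I would pin these down by evaluating the central cocharacter on each piece and matching with the known normalization; Lemma~\ref{lem:firstmain} is exactly the tool that converts such a twist by $|\det|^m$ into the scalar $q^n$ when one later applies Hecke operators, so getting these exponents exactly right is essential for the downstream induction.

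The last step is to identify $\bfV(M)$ as the minimal representation of $M/C$, i.e.\ of the semisimple part of the $D_4$-Levi. This should follow because the closed piece of the Jacquet module supported on the null cone is, by the same mechanism used throughout \cite{ms}, the minimal representation of the derived group of $M$ (the "descent" of minimality along the parabolic), twisted by a character of the center $C$; the character is then absorbed into the $|\det|^{1/2}$ factor. I would cite the relevant statement in \cite{ms} for this identification rather than reprove it. The main obstacle I anticipate is purely the normalization arithmetic — keeping straight the three competing modular characters ($\delta_{\bar U}$, $\delta_{\bar N}$, and $\delta_{N_1}$ for the $G_2$-parabolic), the shift built into the definition of $r_{\bar U}$, and the grading weights of $C$ on $N$ — so that the twists come out as exactly $|\det|^{1/2}$ and $|\det|^{5/2}$ and not some other half-integers; everything else is a direct transcription of the $D_5$ case of \cite{ms} into the notation of Section~\ref{sec:generalcase}.
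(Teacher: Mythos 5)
Your overall strategy --- quote the unnormalized Jacquet module $\bfV_{\bar U}$ from Proposition~2.3 of \cite{ms} and then do the normalization bookkeeping --- is exactly what the paper does. The paper's proof records that $\bfV_{\bar U}$ has successive quotients $\Ind_{P_1}^{G_2}(C_c^\infty(\GL_1))\otimes|\det|^3$ and $\bfV(M)\otimes|\det|\oplus 1\otimes|\det|^3$ (unnormalized induction), twists by $\delta_{\bar U}^{-1/2}=|\det|^{-1/2}$ to get the exponents $\tfrac52$ and $\tfrac12$, and then uses the isomorphism $C_c^\infty(\GL_1)\cong C_c^\infty(\GL_1)\otimes|\det|^s$ of $\GL_1\times\GL_1$-modules together with $\delta_{N_1}=|\det|^5$ to convert $\Ind_{P_1}^{G_2}$ into $i_{P_1}^{G_2}$ while simultaneously absorbing the residual $|\det|^{5/2}$ on the $L$-side. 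You anticipate all of this bookkeeping correctly, including which modular characters enter.

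However, your geometric gloss on where the two pieces come from is wrong, and it would derail you if you tried to fill in details rather than just transcribe \cite{ms}. You attribute the induced piece $i_{P_1}^{G_2}(C_c^\infty(\GL_1))$ to ``the generic orbit of $G_2$ on $\Oct^0$ away from the null cone,'' i.e.\ the nonzero-norm locus, and claim its stabilizer is controlled by $P_1$. The stabilizer in $G_2$ of a trace-zero octonion of nonzero norm is reductive ($\SL_3$ or $\SU_3$; the paper itself notes that the stabilizer of $s_4-t_4$ is $\SL_3$), not a parabolic, so no induced-from-$P_1$ module can arise there. More to the point, the minimal representation is supported only on the closure of the minimal orbit: the induced piece comes from $C_c^\infty(\Omega_0)$, where $\Omega_0$ is the set of \emph{nonzero trace-zero, norm-zero} octonions (the punctured null cone), on whose lines $G_2$ acts transitively with line-stabilizer $P_1$; the open nonzero-norm orbit contributes nothing. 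Likewise $\bfV(M)\otimes|\det|^{1/2}\oplus 1\otimes|\det|^{5/2}$ is not ``supported on the null cone and the origin'': it is the image of $\bfV_{\bar U}$ in $\bfV_{\bar N}$, i.e.\ the constant-term piece, which \cite{ms} computes as $\bfV(M)\otimes|\det|\oplus 1\otimes|\det|^3$ before normalization. Compare Proposition~\ref{prop:msE8} and the $E_8$ analysis in Section~\ref{sec:E8Jac}, where exactly this submodule/quotient structure ($C_c^\infty(\Omega_0)$ versus $\bfV_{\bar N}$) is spelled out. Correcting this identification of the orbits is necessary; with it, the rest of your plan goes through as stated.
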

\begin{proof} 
This is simply a normalized version of Proposition~2.3 of \cite{ms}. Indeed,  $\bfV_{\bar U}$ has a filtration with two successive 
quotients
\begin{itemize} 
\item  $\Ind_{P_1}^{G_2}(C_c^\infty(\GL_1)) \otimes |\det|^3$. 
\item $\bfV(M)\otimes |\det| \oplus 1\otimes |\det|^{3}$. 
\end{itemize} 
where induction is not normalized. Since $r_{\bar U}(\bfV)=\bfV_{\bar U}\otimes \delta_{\bar U}^{-\frac12}$,  
and $\delta_{\bar U}=|\det|$ by Table \ref{table:groupdata}, $r_{\bar U}(\bfV)$ has 
a filtration with two successive quotients
\begin{itemize} 
\item  $\Ind_{P_1}^{G_2}(C_c^\infty(\GL_1)) \otimes |\det|^{\frac52}$. 
\item $\bfV(M)\otimes |\det|^{\frac12} \oplus 1\otimes |\det|^{\frac52}$. 
\end{itemize} 
and (2) follows. 
Since, for any $m$ and $s$, $C_c^\infty(\GL_m) \cong C_c^\infty(\GL_m)\otimes |\det|^s$ as $\GL_m \times \GL_m$-modules,  we can replace $C_c^\infty(\GL_1)$ in the first bullet by $C_c^\infty(\GL_1)\otimes |\det|^{-\frac{5}{2}}$.  
By \eqref{eq:usualind} and \eqref{eq:g2_rho}, this normalizes the induction for $G_2$ and, at the same time, removes the character  $|\det|^{\frac52}$  of $\GL_1$.  Hence (1) follows.  
\end{proof} 
\medskip 

\begin{proof}[Proof of Theorem~\ref{theorem} in case $\bfG=D_5$.]
The dual group of  $H=\PGL_2$  is  $\SL_2(\C)$. The map $r:\SL_2(\C)\to G_2(\C)$ corresponds to a long root
$\alpha_l$  of $G_2$: $r(\SL_2(\C))=\SL_{2,l}(\C)\subset G_2(\C)$.  The centralizer $C$  of $\SL_{2,l}(\C)$ in $G_2(\C)$  
is  $\SL_{2,s}(\C)$ corresponding to a short root $\alpha_ s$ perpendicular to $\alpha_l$. 

Let $V$ be a finite dimensional representation of $G_2(\C)$ and $T_2$ the corresponding Hecke operator for $G_2$.
Let  
\begin{equation}\label{eq:s}
s= \alpha^*_s(q^{1/2})=\smatr{q^{1/2}}{0}{0}{q^{-1/2}} \in \SL_{2,s}(\C).
\end{equation} 
  If the restriction of $V$ to $\SL_{2,l}(\C)\times \SL_{2,s}(\C)$ is 
$\sum V'\otimes V''$,  we define $T_1$ as corresponding to $\sum \Tr_{V''}(s) V'$.  We want to show that $T_2$ matches with 
$\mathcal S_L(T_1)$ on $r_{\bar U}(\bfV)$.  Since $\wh L$ is the torus $\alpha^*_l(\C^{\times})\subseteq \SL_{2,l}(\C)$, the operator 
 $\mathcal S_L(T_1)$ corresponds to the representation $\sum \Tr_{V''}(s) V'$ of $\alpha^*_l(\C^{\times})$ 
 obtained by restricting each $V'$  to the torus $\alpha^*_l(\C^{\times})$.  
 
 First, we show  matching on (1) in Proposition \ref{prop:msD5}. By Lemma \ref{lem:main}, we need to show  matching of 
 $\mathcal S_{M_1}(T_2)$ and $\mathcal S_L(T_1)$ on $C_{c}^{\infty}(\GL_1)$.  The operator $\mathcal S_{M_1}(T_2)$
 corresponds to the restriction of $V$ to $\wh{M_1}\simeq \GL_{2,s}(\C)$, the dual group of $M_1$.
   The center of $\GL_{2,s}(\C)$ is the torus 
$\alpha^*_l(\C^{\times})\subseteq \SL_{2,l}(\C)$. Let  $s$  be as in \eqref{eq:s}  and 
 let $V=\sum V'\otimes V''$ be the restriction of $V$ to $\SL_{2,l}(\C)\times \SL_{2,s}(\C)$, as before. 
By Lemma \ref{lem:thirdmain},  $\mathcal S_{M_1}(T_2)$ acts on 
$C_{c}^{\infty}(\GL_1)$ as the Hecke operator for $\GL_1$ that corresponds to the representation $\sum \Tr_{V''}(s) V'$
 of $\alpha^*_l(\C^{\times})$, the center of  $\GL_{2,s}(\C)$.  In particular, this is the same $\GL_1$-operator as $\mathcal S_L(T_1)$. 
Matching now follows from Lemma \ref{lem:easymatch}.

Using the previously proved base case, matching on (2) in Proposition \ref{prop:msD5} reduces to a simple check on 
two $G_2 \times L$ modules: $\pi_{sr}\otimes |\det|^{\frac12}$ and $1\otimes |\det|^{\frac52}$. 
\end{proof}

\subsubsection{$\bfG=E_6$}

\begin{proposition}\label{prop:msE6} As a $G_2\times \GL_2$-module, $r_{\bar U}(\bfV)$ has a filtration with three successive 
sub quotients
\begin{enumerate} 
\item $i^{G_2}_{P_2}(C_c^\infty(\GL_2))$.
\item   $i^{G_2\times \GL_2}_{P_1\times Q_1}(C_c^\infty(\GL_1)[-\frac12,1])$. 
\item $\bfV(M)\otimes |\det|^{\frac{1}{2}} \oplus 1\otimes |\det|^{\frac{3}{2}}$.
\end{enumerate} 
Here $\bfV(M)$ is the minimal representation of $M/C$. 
\end{proposition}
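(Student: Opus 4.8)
\textbf{Proof proposal for Proposition~\ref{prop:msE6}.}
The plan is to deduce this from Proposition~2.4 (or the corresponding $E_6$ statement) of \cite{ms} by the same normalization bookkeeping used in the proof of Proposition~\ref{prop:msD5}. First I would recall the unnormalized Jacquet module $\bfV_{\bar U}$ as computed in \cite{ms}: it carries a filtration with three successive quotients, the top one built from $\Ind_{P_2}^{G_2}(C_c^\infty(\GL_2))$ twisted by an appropriate power of $|\det|$ (coming from the $\GL_2$-similitude action on the relevant octonionic space), the middle one an induced module from the product parabolic $P_1\times Q_1$ of $G_2\times\GL_2$ of the form $\Ind_{P_1\times Q_1}^{G_2\times\GL_2}(C_c^\infty(\GL_1)[a,b])$ for explicit $a,b$, and the bottom one $\bfV(M)\otimes|\det|^{c}\oplus 1\otimes|\det|^{c'}$, where $\bfV(M)$ is the minimal representation of $M/C$ of type $D_5$.

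Next I would apply the identity $r_{\bar U}(\bfV)=\bfV_{\bar U}\otimes\delta_{\bar U}^{-1/2}$, using $\delta_{\bar U}=|\det|$ from Table~\ref{table:groupdata}, to shift every $|\det|$-exponent down by $\tfrac12$. This immediately normalizes the bottom quotient and yields item (3), with $c=\tfrac12$ and $c'=\tfrac32$; the half-integer $\tfrac32$ should match $\delta_{\bar N}/\delta_{\bar U}$ restricted to $L$, which from Table~\ref{table:groupdata} is $|\det|^{8-1}=|\det|^{7}$ on $\GL_2\cong L$, i.e.\ $|\det|^{7/2}$ after the $\delta_{\bar U}^{-1/2}$ twist — I would double-check the precise power against the conventions, since this is exactly the kind of exponent that is easy to get wrong by a factor depending on whether $\det$ is taken on $\GL_2$ or on the torus. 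For items (1) and (2), I would use the trick already used for $D_5$: since $C_c^\infty(\GL_m)\cong C_c^\infty(\GL_m)\otimes|\det|^s$ as a $\GL_m\times\GL_m$-bimodule for any $s$, one is free to absorb a central twist. For (1), absorbing the appropriate $|\det|^s$ on the $\GL_2$-factor converts the unnormalized $\Ind_{P_2}^{G_2}$ into the normalized $i^{G_2}_{P_2}$ — using \eqref{eq:usualind} together with $\delta_{N_2}=|\det|^3$ from \eqref{eq:g2_rho} — while simultaneously clearing the leftover $|\det|$-character on $\GL_2\cong L$. For (2), the same maneuver on the $\GL_1$-factor normalizes both the $G_2$-induction (via $\delta_{N_1}=|\det|^5$) and the $\GL_2$-induction (via \eqref{eq:gl_rho} with $n=2$, $m=1$), and the residual bidegree on $C_c^\infty(\GL_1)$ is recorded as the twist $[-\tfrac12,1]$.

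The main obstacle I anticipate is purely computational: keeping all the modular-character exponents straight across three groups ($G_2$ with $\delta_{N_1},\delta_{N_2}$; $L=\GL_2$ with $\delta_{U_1}$; and $\bfG$ with $\delta_{\bar U},\delta_{\bar N}$) and three filtration steps simultaneously, and correctly translating between "$\det$ on $\GL_m$" and "$\det$ restricted to a subtorus such as the image of $\lambda_*$." In particular the asymmetric twist $[-\tfrac12,1]$ in item (2) is the delicate point — it is the residue, after the global $\delta_{\bar U}^{-1/2}$ shift and the two normalization absorptions, of the difference between the $G_2$-side and $L$-side similitude actions on the octonionic piece $\mathbb O_0\otimes F$ sitting inside $N_0$. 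Beyond that arithmetic, the argument is entirely parallel to the $D_5$ case and requires no new input; one just cites \cite{ms} for the unnormalized module and turns the crank.
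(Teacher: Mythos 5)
Your proposal is correct and follows essentially the same route as the paper: the paper likewise simply quotes the unnormalized filtration from Theorem~4.3 of \cite{ms} (with twists $|\det|^2$, $|\det|^2$, and $|\det|\oplus|\det|^2$), applies $r_{\bar U}(\bfV)=\bfV_{\bar U}\otimes\delta_{\bar U}^{-1/2}=\bfV_{\bar U}\otimes|\det|^{-1/2}$, and absorbs central twists into $C_c^\infty(\GL_m)$ exactly as in the proof of Proposition~\ref{prop:msD5}. The only slip is your side remark that the exponent $\tfrac32$ ``should match $\delta_{\bar N}/\delta_{\bar U}$'' --- it does not and need not, since $\tfrac32$ is just $2-\tfrac12$ with the $2$ read off from \cite{ms} --- but as you flagged this for checking and it plays no role in the argument, the proof stands.
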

This is a normalized version of Theorem~4.3 of \cite{ms} which states that  $\bfV_{\bar U}$
 has a filtration with three successive quotients
\begin{itemize} 
\item  $\Ind_{P_2}^{G_2}(C_c^\infty(\GL_2)) \otimes |\det|^2$. 
\item  $\Ind_{P_1\times Q_1}^{G_2\times \GL_2}(C_c^\infty(\GL_1)) \otimes |\det|^2$. 
\item $\bfV(M)\otimes |\det| \oplus 1\otimes |\det|^{2}$. 
\end{itemize} 

\begin{proof}[Proof of Theorem~\ref{theorem} in case $\bfG=E_6$.]
The dual group of  $H=\PGL_3$  is  $\SL_3(\C)$. We have an inclusion $r:\SL_3(\C)\to G_2(\C)$ where 
$\SL_{3}(\C)\subset G_2(\C)$ is given by the long roots. 

Let $V$ be a finite dimensional representation of $G_2(\C)$ and $T_2$ the corresponding Hecke operator for 
$G_2$. We restrict $V$ to $\SL_3(\C)$, and let $T_2$ be the corresponding Hecke operator for $\PGL_3$. 
We want to show that $T_2$ matches with $\mathcal S_L(T_1)$ on $r_{\bar U}(\bfV)$.  

First, we consider matching on $\bfV(M)\otimes |\det|^{\frac12}$. The dual group of $L=\GL_2$ is $\GL_{2,l}(\C) \subseteq \SL_3(\C)$. 
The group $L$ acts on $\bfV(M)$ by its quotient $\PGL_2$. The dual group of $\PGL_2$ is $\SL_{2,l}(\C)$.  Thus, 
$\mathcal S_{L}(T_1)$ acts on $\bfV(M)$ as the Hecke operator for $\PGL_2$ that corresponds to the restriction of $V$ to 
$\SL_{2,l}(\C)$. We need to take into account the twist by $|\det|^{\frac12}$. 
Let $\chi$ be the determinant character of $L$.  Let $\chi^*: \C^{\times}\rightarrow \GL_{2,l}(\C)$ be the corresponding co-character. 
Note that $\chi^*=\alpha^*_s$.  Let $s=\alpha^*_s(q^{1/2}) \in \SL_{2,s}(\C)$.  
Let $V=\sum V'\otimes V''$ be the restriction of $V$ to $\SL_{2,l}(\C)\times \SL_{2,s}(\C)$.  Let $T$ be the Hecke operator 
for $\PGL_2$ that corresponds to the representation $\sum \Tr_{V''}(s) V'$ of $\SL_{2,l}(\C)$. 
By Lemma \ref{lem:firstmain}, $\mathcal S_L(T_1)$ acts on 
$\bfV(M)\otimes|\det|^{\frac12}$ as $T$ acts on $\bfV(M)$. But $T$ is matched with $T_2$ on $\bfV(M)$, by the case $\bfG=D_5$. 

To prove matching on  (1) in Proposition \ref{prop:msE6}  it suffices to show that 
$\mathcal S_{M_2}(T_2)$ and $\mathcal S_L(T_1)$ are matching on $C_c^{\infty}(\GL_2)$, by Lemma \ref{lem:main}. 
Since the dual group of $M_2$ is conjugated in $G_2(\C)$ to $\GL_{2,l}(\C)$, the dual group of $L$, matching of 
$\mathcal S_{M_2}(T_2)$ and $\mathcal S_L(T_1)$ follows from Lemma \ref{lem:easymatch}. 
(See also the remark following Lemma \ref{lem:easymatch}.) 

Matching on (2) is similar to (1), albeit slightly more complicated to write down, so we omit details.  
\end{proof}

\subsubsection{$\bfG=E_7$} 

\begin{proposition}\label{prop:msE7} As a $G_2\times \GL_3$-module, $r_{\bar U}(\bfV)$ has a filtration with three successive 
sub quotients
\begin{enumerate} 
\item $i^{G_2\times \GL_3}_{P_2\times Q_2}(C_c^\infty(\GL_2))$.
\item   $i^{G_2\times \GL_3}_{P_1\times Q_1}(C_c^\infty(\GL_1)[-\frac12,\frac12])$. 
\item $\bfV(M) \oplus 1\otimes |\det|$.
\end{enumerate} 
Here $\bfV(M)$ is the minimal representation of $M/C$. 
\end{proposition}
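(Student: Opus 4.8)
The plan is to obtain Proposition~\ref{prop:msE7} as the normalized form of the computation of the unnormalized Jacquet module $\bfV_{\bar U}$ carried out in \cite{ms} for $\bfG=E_7$, in exactly the way that Propositions~\ref{prop:msD5} and~\ref{prop:msE6} were deduced for $D_5$ and $E_6$. So the first step is to transcribe from \cite{ms} the three-step filtration of $\bfV_{\bar U}$ as a $G_2\times\GL_3$-module: its successive quotients are two unnormalized parabolically induced modules, $\Ind_{P_2\times Q_2}^{G_2\times\GL_3}(C_c^\infty(\GL_2))$ and $\Ind_{P_1\times Q_1}^{G_2\times\GL_3}(C_c^\infty(\GL_1))$, each tensored with an explicit power of $|\det|$ of $\GL_3$, together with a bottom quotient of the shape $\bfV(M)\otimes|\det|^{a}\oplus 1\otimes|\det|^{b}$, where $\bfV(M)$ is the minimal representation of $M/C$ with $M$ of type $E_6$.

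The second step is to pass from $\bfV_{\bar U}$ to $r_{\bar U}(\bfV)=\bfV_{\bar U}\otimes\delta_{\bar U}^{-1/2}$; by Table~\ref{table:groupdata} we have $\delta_{\bar U}=|\det|^{2}$ on $L=\GL_3$, so this amounts to a twist by $|\det|^{-1}$. The bottom quotient then becomes $\bfV(M)\oplus 1\otimes|\det|$, which is~(3). For the two induced quotients, the third and final step is to rewrite the unnormalized inductions as normalized ones using \eqref{eq:usualind}: on the $G_2$ side this introduces $\delta_{N_m}^{1/2}$, with $\delta_{N_1}=|\det|^5$ and $\delta_{N_2}=|\det|^3$ from \eqref{eq:g2_rho}, and on the $\GL_3$ side it introduces $\delta_{U_m}^{1/2}$, with $\delta_{U_m}$ read from \eqref{eq:gl_rho} at $n=3$; the leftover power of $|\det|$ is then shuffled between the two $\GL_m$-factors of $C_c^\infty(\GL_m)$ using the isomorphism $C_c^\infty(\GL_m)\cong C_c^\infty(\GL_m)\otimes|\det|^{s}$ of $\GL_m\times\GL_m$-modules. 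For $m=2$ the residual twist cancels entirely, giving~(1), and for $m=1$ one is left with the twist $[-\frac12,\frac12]$ on $C_c^\infty(\GL_1)$, giving~(2).

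The only real work is the character bookkeeping in this last step: each leftover power of $|\det|$ on $\GL_3$ must be split correctly into the part $\delta_{N_m}^{1/2}$ that normalizes the $G_2$-induction, the part $\delta_{U_m}^{1/2}$ that normalizes the $\GL_3$-induction from $Q_m$, and the residual character on the $M_m\times L_m$-module $C_c^\infty(\GL_m)$, which after applying $C_c^\infty(\GL_m)\cong C_c^\infty(\GL_m)\otimes|\det|^{s}$ must reduce to precisely the bracket decorations recorded in the statement. Pinning down the half-integer exponents $-\frac12$ and $\frac12$ in~(2) is the delicate point; the rest is a direct transcription of \cite{ms}, exactly as in the proofs of Propositions~\ref{prop:msD5} and~\ref{prop:msE6}.
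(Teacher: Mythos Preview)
Your proposal is correct and follows essentially the same approach as the paper: the paper simply states that the proposition is the normalized version of Theorem~5.3 of \cite{ms}, records the unnormalized filtration with its $|\det|$-twists, and leaves the normalization bookkeeping implicit (exactly as was spelled out in detail for Proposition~\ref{prop:msD5}). Your outline of that bookkeeping---twist by $\delta_{\bar U}^{-1/2}=|\det|^{-1}$, then absorb the modular characters \eqref{eq:g2_rho} and \eqref{eq:gl_rho} to pass from $\Ind$ to $i$, using $C_c^\infty(\GL_m)\cong C_c^\infty(\GL_m)\otimes|\det|^s$---is precisely the intended argument.
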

This is a normalized version of Theorem~5.3 of \cite{ms} which states that  $\bfV_{\bar U}$
 has a filtration with three successive quotients
\begin{itemize} 
\item  $\Ind_{P_2\times Q_2}^{G_2\times \GL_3}(C_c^\infty(\GL_2)) \otimes |\det|^2$. 
\item  $\Ind_{P_1\times Q_1}^{G_2\times \GL_3}(C_c^\infty(\GL_1)) \otimes |\det|^2$. 
\item $\bfV(M)\otimes |\det| \oplus 1\otimes |\det|^{2}$. 
\end{itemize} 


\begin{proof}[Proof of Theorem~\ref{theorem} in case $\bfG=E_7$.]
The dual group of $H=\PGSp_6$ is $\Spin_7(\mathbb C)$.  
Let $\mathbb Z^3$ be the root  lattice of $\Spin_7(\mathbb C)$ so that the short roots correspond to the standard 
basis vectors $e_1, e_2, e_3$  in $\mathbb Z^3$. 
Let $V_8$ be the spin representation of $\Spin_7(\C)$. The weights of $V_8$ are $ (\pm \frac12, \pm \frac12, \pm\frac12)$. 
Under the action of $\wh L\cong \GL_3(\C)$ the spin representation decomposes as 
\[ 
V_8 = V_1 \oplus V_3 \oplus V_3^* \oplus V^*_1 
\] 
where $V_3$ is the standard representation of $\GL_3(\C)$ and $V_1$ is the determinant character. The weights of
these 4 summands are $(x,y,z)=(\pm \frac12, \pm \frac12, \pm\frac12)$ such that $x+y+z=\frac32,\frac12,-\frac12,-\frac32$, respectively. 
We have an injection  
\[
r: G_2(\C) \rightarrow \Spin_7(\C)
\]
 where $G_2(\C)$ is defined as the stabilizer of a non-zero 
vector in $V_1$, for example. In particular, $G_2(\C) \cap \GL_3(\C)=\SL_3(\C)$. 

Let $V$ be a finite dimensional representation of $\Spin_7(\C)$ and $T_1$ the corresponding Hecke operator for 
$\PGSp_6$. We restrict $V$ to $G_2(\C)$, and let $T_2$ be the corresponding Hecke operator for $G_2$. 
We want to show that $T_2$ matches with $\mathcal S_L(T_1)$ on $r_{\bar U}(\bfV)$. Matching on (3) in Proposition \ref{prop:msE7} 
trivially follows from the previously proved case $\bfG=E_6$. 

To prove matching on (1) it suffices to show that $\mathcal S_{M_2}(T_2)$ matches 
with $\mathcal S_{L_2}\circ \mathcal S_L(T_1)$ on $C_c^{\infty}(\GL_2)$.  We can assume that the dual group of  $M_2$ is
$\GL_{2,l}(\C)\subseteq \SL_3(\C)$ where 
$g\in\GL_{2,l}(\C)$ sits in $\SL_3(\C)$ as a block diagonal matrix $(g,\det g^{-1})$. 
The operator $\mathcal S_{M_2}(T_2)$ corresponds to the restriction of $V$ to $\GL_{2,l}(\C)$.  
On the other hand,  $L_2=\GL_2 \times \GL_1$ 
and  $\mathcal S_{L_2}\circ \mathcal S_L(T_1)$  acts on 
$C_c^{\infty}(\GL_2)$ as the Hecke operator for $\GL_2$ that corresponds to the restriction of $V$ to the first factor of  
\[ 
\wh L_2=\GL_2(\C)\times \GL_1(\C)\subseteq \GL_3(\C)\subseteq \Spin_7(\C). 
\] 
 The first factor of $\wh L_2$ is conjugated to $\GL_{2,l}(\C)$ in $\Spin_7$ by the reflection corresponding to the short root $e_3$. Matching on (1) now follows from Lemma \ref{lem:easymatch}. 

To prove matching on (2) it suffices to show that $\mathcal S_{M_1}(T_2)$ matches 
with $\mathcal S_{L_1}\circ \mathcal S_L(T_1)$ on $C_c^{\infty}(\GL_1)$. 
We have $\wh{M_1}\simeq \GL_{2,s}(\C)$, where the center of $\GL_{2,s}(\C)$ is the torus 
 $\alpha^*_l(\C^{\times})\subseteq \SL_{2,l}(\C)$. By Lemma \ref{lem:thirdmain}, $\mathcal S_{M_2}(T_2)$ acts on $C_c^{\infty}(\GL_1)$ 
 as the Hecke operator for $\GL_1$ that corresponds to the restriction of $V$ to  $\alpha^*_l(\C^{\times})$, 
 weighted by the eigenvalues of $\alpha^*_s(q^{1/2})$. 
On the other hand, $\mathcal S_{L_1}\circ \mathcal S_L(T_1)$  acts on 
$C_c^{\infty}(\GL_1)$ as the Hecke operator for $\GL_1$ that corresponds to the restriction of $V$ to 
$\GL_1(\C)$, the first factor of 
\[ 
\wh L_1=\GL_1(\C)\times \GL_2(\C)\subseteq \GL_3(\C)\subseteq \Spin_7(\C),
\] 
  weighted by the eigenvalues of 
  \[ 
   s=
   \left(\begin{array}{ccc} 
  q^{-\frac12} &  & \\
   & q^{\frac12} & \\
   & & q^{\frac12} \end{array}\right)\cdot 
   \left(\begin{array}{ccc} 
  1 &  & \\
   & q^{\frac12} & \\
   & & q^{-\frac12} \end{array}\right)\in\GL_3(\C) \subseteq \Spin_7(\C), 
   \] 
where the first matrix in the above product reflects the twisting $[-\frac12,\frac12]$ in (2), and the second comes from 
Lemma \ref{lem:secondmain}.
The pairs $(\alpha^*_l (\C^{\times}), \alpha_s^*(q^{1/2}))$ (see \eqref{eq:coroots}) and $(\GL_1(\C), s)$ are conjugated in $\Spin_7(\C)$ 
 by the reflection corresponding to the short root $e_3$. Matching on (2) now follows from Lemma \ref{lem:easymatch}.
\end{proof}

\subsubsection{$\bfG=E_8$}\label{sp_parabolic}

Let $Q_m=L_mU_m$ be the maximal parabolic subgroup of $L\cong\GSp_6$ (as in Section~\ref{l_parabolics}). 
Let $s,t$ be a pair of real numbers. For $m=1,2$, let
$C_c^{\infty}(\GL_m)[s,t]$ be the vector space $C_c^{\infty}(\GL_m)$, with an $M_m\times L_m$-module 
structure defined by  
 $$ (g_1,g_2)\cdot f(h) = \abs{\det{g_1}}^s \abs{i(g_2)}^t f(hg_1)$$
 for any $(g_1,g_2)\in L_m\cong \GL_m \times\GSp_{6-2m}$ and by \eqref{eq:levi_action} for any $g\in M_m$. 

\begin{proposition}\label{prop:swE8} As a $G_2\times \GSp_6$-module, $r_{\bar U}(\bfV)$ has a filtration with three successive 
sub quotients
\begin{enumerate} 
\item $i^{G_2\times \GSp_6}_{P_2\times Q_2}(C_c^\infty(\GL_2)[1,-\frac32])$.
\item   $i^{G_2\times \GSp_6}_{P_1\times Q_1}(C_c^\infty(\GL_1)[ \frac12,-\frac12])$. 
\item $\bfV(M)\otimes |i|^{-1}  \oplus 1\otimes |i|$.
\end{enumerate} 
Here $\bfV(M)$ is the minimal representation of $M/C$. 
\end{proposition}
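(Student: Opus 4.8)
The plan is to deduce this proposition from the unnormalized computation of $\bfV_{\bar U}$ for $\bfG = E_8$, in exact analogy with the $D_5$, $E_6$, $E_7$ cases treated above, the only new feature being the nonabelian unipotent radical $N$ with one-dimensional center $Z$. First I would invoke the computation of the Jacquet module $\bfV_{\bar U}$ (the one "remaining, but rather remarkable case" promised in the introduction and carried out in the final section), which should assert that $\bfV_{\bar U}$ has a three-step filtration whose successive quotients are, up to unnormalized induction and explicit twists by powers of $|i|$,
\begin{itemize}
\item $\Ind_{P_2\times Q_2}^{G_2\times \GSp_6}(C_c^\infty(\GL_2)[a_2,b_2])$,
\item $\Ind_{P_1\times Q_1}^{G_2\times \GSp_6}(C_c^\infty(\GL_1)[a_1,b_1])$,
\item $\bfV(M)\otimes |i|^{c} \oplus 1\otimes |i|^{c'}$,
\end{itemize}
for suitable rational exponents. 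Then, exactly as in Proposition~\ref{prop:msD5}, I pass from $\bfV_{\bar U}$ to $r_{\bar U}(\bfV) = \bfV_{\bar U}\otimes \delta_{\bar U}^{-1/2}$, using $\delta_{\bar U} = |i|^8$ from Table~\ref{table:groupdata}, which introduces an overall twist by $|i|^{-4}$ on the $\GSp_6$-side. I then renormalize each induced piece using \eqref{eq:usualind}, the modular characters $\delta_{N_1} = |\det|^5$, $\delta_{N_2} = |\det|^3$ for the $G_2$-parabolics from \eqref{eq:g2_rho}, and the modular characters $\delta_{U_1}$, $\delta_{U_2}$ for the $\GSp_6$-parabolics from \eqref{eq:sp_rho}; the key bookkeeping identity, as in the $D_5$ proof, is that $C_c^\infty(\GL_m)\cong C_c^\infty(\GL_m)\otimes|\det|^s$ as $\GL_m\times\GL_m$-modules, so any leftover $|\det g_1|$-power can be absorbed to leave exactly the advertised twists $[1,-\tfrac32]$ and $[\tfrac12,-\tfrac12]$, while the $|i|$-powers on the Levi descend to give $\bfV(M)\otimes|i|^{-1}\oplus 1\otimes|i|$ in part (3).

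The one genuinely new point, compared to the earlier cases, is the presence of the center $Z$ of $N$ and the corresponding $\bar Z \subset \bar N$: the minimal representation of $\bfG = E_8$ does not "see" $\bar Z$ trivially, and the Jacquet functor $r_{\bar U}$ must be taken with respect to $\bar U \subset \bar N$ where $\bar N/\bar Z$ is the relevant piece and $\GSp_6$ acts on $\bar Z$ through the isogeny character $i$ (as recorded in the text following Table~\ref{table:groupdata}). This is why the twists here are expressed via $|i|$ rather than a determinant, and it is why the top of the filtration in part (1) carries the asymmetric twist $[1,-\tfrac32]$: the $-\tfrac32$ records the interaction of $\delta_{\bar U}^{-1/2}$ and $\delta_{U_2}$ (which involves $|i(g_2)|$-powers via \eqref{eq:sp_rho}), while the $+1$ on the $G_2$-side matches the $E_6$ and $E_7$ patterns. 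I would verify these exponents by the same direct substitution: compute $\delta_{N_m}^{1/2}$ and $\delta_{U_m}^{1/2}$ on the split torus, compare with the twist appearing in the unnormalized statement together with $\delta_{\bar U}^{-1/2} = |i|^{-4}$, and read off the residual character on $\GL_m$ and on the $\GSp_{6-2m}$-factor.

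I expect the main obstacle to be not the renormalization arithmetic — that is mechanical once the unnormalized $\bfV_{\bar U}$ is in hand — but rather establishing the input itself, namely the structure of $\bfV_{\bar U}$ for $E_8$: unlike the $E_6$, $E_7$ cases this is not available in \cite{ms} and is the content of the paper's final section. Assuming that computation, the proof of the proposition proper is a careful but routine twist-tracking exercise; the only subtlety to watch is to use the \emph{isogeny} character $i$ consistently (so that factors of $|i(g_2)|$ from $\delta_{U_m}$ on $\GSp_{6-2m}$ are not confused with $|\det|$-type factors) and to make sure the $C_c^\infty(\GL_m)\otimes|\det|^s\cong C_c^\infty(\GL_m)$ move is applied only to the $\GL_m$-variable $g_1$ and not to the $\GSp_{6-2m}$-variable $g_2$, since the latter carries genuine information about the module.
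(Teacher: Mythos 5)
Your renormalization bookkeeping is correct and agrees with what the paper does in its final step; for instance, for the top piece with $m=2$ one has $\delta_{U_2}^{-1/2}=|\det g_1|^{5/2}|i(g_2)|^{-5/2}$, transferring $\delta_{N_2}^{-1/2}=|\det|^{-3/2}$ from the $M_2$-action to the right $\GL_2$-action contributes $|\det g_1|^{-3/2}$, and $\delta_{\bar U}^{-1/2}=|i|^{-4}$ against the unnormalized twist $|i|^{5}$ leaves $|i(g_2)|^{1}$, giving exactly $[1,-\tfrac32]$. But you have deferred the entire mathematical content of the proposition. The unnormalized filtration of $\bfV_{\bar U}$ is not something you can ``invoke'': it is not in \cite{ms}, and the proposition \emph{is} that computation in normalized form, so assuming it is circular. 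What \cite{ms} actually supplies (Theorem 6.1 there, quoted as Proposition~\ref{prop:msE8}) is only the exact sequence $0\to C_c^\infty(\Omega)\to \bfV_{\bar Z}\to\bfV_{\bar N}\to 0$ for the one-dimensional center $\bar Z$ of the Heisenberg group $\bar N$, with $M$ acting on $C_c^\infty(\Omega)$ by $|i(m)|^5f(m^{-1}xm)$ and $\bar N$ acting through the characters $\psi(\langle x,\bar n\rangle)$.

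The missing steps, which are the paper's actual proof, are: (a) passing from $\bar Z$- to $\bar U$-coinvariants of $C_c^\infty(\Omega)$, which by the character formula for the $\bar N$-action amounts to restricting functions to $\Omega_0=\Omega\cap(\bar U/\bar Z)^{\perp}$, identified with pairs $((x,y,z),(x',y',z'))$ of traceless octonions in $J^0\oplus J^0$; (b) determining the $G_2\times\GSp_6$-orbit structure of $\Omega_0$ --- the paper shows (Proposition~\ref{prop:E8Omega0}) that $\Omega_0$ has exactly two orbits $\Omega_1,\Omega_2$, indexed by whether $\langle x,y,z,x',y',z'\rangle$ is a $1$- or $2$-dimensional null subspace of $\Oct^0$, and the two-dimensional case rests on a nontrivial octonionic cross-product computation (Lemma~\ref{lem:crossprodcondition}) needed to pin down the constraint $x\wedge x'+y\wedge y'+z\wedge z'=0$ and reduce to the representative $((x,0,z),(0,0,0))$; and (c) computing the stabilizers $S_m$ as the preimages of the diagonal $\GL_m$ under $P_m\times Q_m\to\GL_m\times\GL_m$, which is what yields $C_c^\infty(\Omega_m)\cong\Ind_{P_m\times Q_m}^{G_2\times\GSp_6}(C_c^\infty(\GL_m))\otimes|i|^{5}$ and $\bfV_{\bar N}\cong\bfV(M)\otimes|i|^{3}\oplus 1\otimes|i|^{5}$. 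Without (a)--(c) your argument establishes nothing beyond the twist arithmetic; with them, the normalization you describe is indeed the one-line finish.
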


The proof of this proposition is given in Section~\ref{sec:E8Jac}.

\begin{proof}[Proof of Theorem~\ref{theorem} in case $\bfG=E_8$.]
The map $r: G_2(\C) \rightarrow F_4(\C)$ is described as follows.  A split,  simply connected group $G_{sc}$ of type $E_6$ can be 
realized as a subgroup of  $\GL(J_{27})$  fixing $\det : J_{27} \rightarrow F$. As described in \ref{example},  there is a dual pair 
$G_2 \times SL_3\subseteq G_{sc}$. The group $F_4$ is the subgroup of $G_{sc}$ consisting  of  elements fixing the 
identity matrix in $J_{27}$. It is easy to check that $(G_2\times \SL_3)\cap F_4=G_2 \times \SO_3$. This defines $r$, and 
 the centralizer $C$ of $r(G_2(\C))$ is $\SO_3(\C)$.  The dual group $\wh L$ of $L=\GSp_6$ is a Levi factor of type $B_3$. 
 Let $i^* : \C^{\times}\rightarrow  \wh L$ be the co-character corresponding to the isogeny character $i$ of $\GSp_6$. Then 
 $i^*(\C^{\times})$ is the center of $\wh L$.  We can conjugate $G_2(\C)$ in $F_4(\C)$ so that 
 \[ 
 G_2(\C) \subseteq \Spin_7(\C) = [\wh L,\wh L]. 
 \] 
 In this way, $i^*(\C^{\times})$ is a maximal torus in $\SO_3(\C)$, the centralizer of $G_2(\C)$. The image 
 of $\smatr{q^{1/2}}{0}{0}{q^{-1/2}} \in \SL_2(\C)$ in $\SO_3(\C)$ is $s=i^*(q)$. 

Let $V$ be a finite dimensional representation of $F_4(\C)$ and $T_1$ the corresponding Hecke operator for 
$F_4$. If $V=\sum V'\otimes V''$ is the restriction of $V$  to $G_2(\C)\times \SO_3(\C)$, we have defined $T_2$ as corresponding to 
$\sum \Tr_{V''}(s) V'$. 
We want to show that $T_2$ matches with $\mathcal S_L(T_1)$ on $r_{\bar U}(\bfV)$.   The operator $\mathcal S_L(T_1)$
corresponds to the representation $\wh L$ obtained by restricting $V$ to $\wh L$. We now show matching on (3).  
Let $V=\sum V_n$ be the restriction of $V$ to $\Spin_7(\C)=[\wh L,\wh L]$, where $i^*(q)$ act as $q^n$ on $V_n$. 
Let $T$ be the Hecke operator for $\PGSp_6$ that corresponds to the representation $\sum q^{-n} V_n$ of $\Spin_7(\C)$. 
Then, 
by Lemma \ref{lem:firstmain}, $\mathcal S_L(T_1)$ acts on $\bfV(M)\otimes |i|^{-1}$  as $T$ acts on $\bfV(M)$. Matching
on  $\bfV(M)\otimes |i|^{-1}$  now follows from the previously proved  case $\bfG=E_7$.  Let $s_p\in G_2(\C)$ be the image of 
$\smatr{q^{1/2}}{0}{0}{q^{-1/2}} \in \SL_2(\C)$ under the principal 
\[ 
f:\SL_2(\C)\rightarrow G_2(\C).
\] 
 Then $T_2$ acts on $1\otimes |i|$ as 
the scalar $\sum \Tr_{V''}(s)\cdot \Tr_{V'}(s_p)$. Under the inclusion $G_2(\C) \subseteq \Spin_7(\C)$,  the composite 
$f: \SL_2(\C)\rightarrow \Spin_7(\C)$ is the principal $\SL_2(\C)$ in $\Spin_7(\C)$.
 By Lemma  \ref{lem:firstmain}, $\mathcal S_L(T_1)$ acts on 
$1\otimes |i|$ as the scalar $\sum q^n\Tr_{V_n}(s_p)$. Since 
\[ 
\sum q^n\Tr_{V_n}(s_p)= \sum \Tr_{V''}(s)\cdot \Tr_{V'}(s_p), 
\] 
 matching is now proved on (3). The remaining 
cases are similar to $\bfG=E_7$, so we leave them as an exercise. 
\end{proof}

\section{A Jacquet module for $E_8$}\label{sec:E8Jac}

In this section we prove Proposition \ref{prop:swE8}.  In this case $N$ is a Heisenberg group. 
A starting point is the following (Theorem 6.1 \cite{ms}). 

\begin{proposition}  \label{prop:msE8} Let $\Omega$ be the $M$-orbit  of the highest weight vector in $N/Z$.  We have the 
following exact sequence of  $\bar P$-modules, 
\[ 
0\rightarrow C_c^{\infty}(\Omega)\rightarrow \bfV_{\bar Z}\rightarrow  \bfV_{\bar N}\rightarrow 0.
\]
The action of $\bar P$ on  $f\in C_c^{\infty}(\Omega)$ is given by: 
\begin{itemize} 
\item For every  $\bar n\in \bar N$
\[ 
\Pi(n)f(x)=\psi(\langle  x, \bar n\rangle )f(x).
\]  
\item For every  $m\in  M$ 
\[
\Pi(m)f(x)= |i(m)|^5 f(m^{-1} x m).
\]  
\end{itemize} 
Here $\psi$ is a non-trivial additive character, $\langle\cdot,\cdot\rangle$ is a pairing between $N/Z$ and $\bar N/\bar Z$ induced by the 
Killing form, and $i: M \rightarrow \GL_1$ 
is the character obtained by acting on $\bar Z$. 
Moreover, $\bfV_{\bar N} \cong \bfV(M)\otimes |i|^3 \oplus  |i|^5 $ where $\bfV(M)$ is the minimal representation of $M$ with the center acting trivially. 

\end{proposition}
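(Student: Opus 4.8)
The plan is to compute the coinvariants $\bfV_{\bar Z}$ by Fourier analysis along the abelian quotient $\bar N/\bar Z$, using the minimality of $\bfV$ to pin down the spectrum. Recall that $P=MN$ is the Heisenberg parabolic of $\bfG=E_8$: thus $Z$ is the root subgroup of the highest root, $M$ is of type $E_7$ times the one-dimensional center $C$, and $N/Z$ with its $M$-action is the $56$-dimensional minuscule representation of simply connected $E_7$, with $C\cong\GL_1$ scaling. Since $\bar N$ normalizes its center $\bar Z$ and $M$ normalizes $\bar N$, both $\bar N/\bar Z$ (through a vector group) and $M$ act on $\bfV_{\bar Z}$. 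Fourier theory presents $\bfV_{\bar Z}$ as a smooth sheaf on the dual vector space of $\bar N/\bar Z$, which we identify with $N/Z$ via the Killing pairing $\langle\,\cdot\,,\,\cdot\,\rangle$; the fibre over $0$ is $\bfV_{\bar N}$, the map $\bfV_{\bar Z}\twoheadrightarrow\bfV_{\bar N}$ is the evident projection, and its kernel is the part of the sheaf supported away from $0$. Since $M$ acts on $N/Z$ by the coadjoint action, the support $S\subseteq N/Z$ of this kernel is $M$-stable.

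The crux is to show $S=\Omega$ with the sheaf of rank one there. First, $S\subseteq\overline\Omega=\Omega\sqcup\{0\}$: the characters of $\bar N$ occurring in twisted Jacquet modules of $\bfV$ are controlled by the wave front set of $\bfV$, and minimality (the annihilator of $\bfV$ is the Joseph ideal) forces them to lie over $\overline\Omega$. I would make this precise in one of two ways: import the known associated variety of $\bfV$ together with the geometric fact that it meets $N/Z$ exactly in $\overline\Omega$; or, more self-containedly, realize $\bfV$ as the irreducible subrepresentation of a degenerate principal series $\Ind_{P'}^{\bfG}(\chi)$ from a suitable maximal parabolic $P'$ (as in Kazhdan--Savin's construction of the minimal representation of $E_8$) and compute the $\bar Z$-coinvariants of $\Ind_{P'}^{\bfG}(\chi)$ via its Bruhat filtration, which reduces the support statement to the classical orbit geometry of $\GL_1\times E_7$ on the $56$-dimensional representation. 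Second, $S\neq\{0\}$ and the rank along $\Omega$ is one: the fibre over $x_0\in\Omega$ is the twisted Jacquet module of $\bfV$ with respect to the character $\bar n\mapsto\psi(\langle x_0,\bar n\rangle)$ of $\bar N$, which is one-dimensional because $\bfV$ is minimal. Hence, by Mackey theory, the away-from-$0$ part of $\bfV_{\bar Z}$ is $C_c^\infty(\Omega)$ as a space, with $\bar N$ acting by $f(x)\mapsto\psi(\langle x,\bar n\rangle)f(x)$.

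It remains to identify the $M$-action and the fibre over $0$. Because $M$ acts on $\Omega$ by conjugation, $\Pi(m)$ must send $f(x)$ to $\eta(m)\,f(m^{-1}xm)$ for a positive character $\eta$ of $M$, namely the Jacobian of the conjugation action on $\Omega$ twisted by the character by which $\stab_M(x_0)$ acts on the one-dimensional fibre; one checks the latter character is trivial and then a modular-character count (balancing $\delta_{\bar Z}$ against $\delta_{\bar N}$ when passing between $\bar N$, $\bar Z$ and their normalizers, with $\dim N/Z=56$) identifies $\eta=\abs{i}^5$. The same bookkeeping applied over $0$, together with the known Heisenberg Jacquet module of the minimal representation (read off from the $\Ind_{P'}^{\bfG}(\chi)$ model, a two-step quotient with graded pieces the minimal representation of $M/C$ and the trivial representation), yields $\bfV_{\bar N}\cong\bfV(M)\otimes\abs{i}^3\oplus\abs{i}^5$.

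The main obstacle is the support-and-multiplicity-one statement $S=\Omega$ with rank one: this is exactly the input where minimality of $\bfV$ must enter essentially, and a clean rigorous treatment seems to require committing to an explicit model --- most naturally the degenerate principal series realization --- and carrying out a Bruhat-cell analysis of its Jacquet module, the orbit combinatorics of $E_7$ on its $56$-dimensional representation being classical. Everything downstream (the Mackey-theoretic reassembly of $C_c^\infty(\Omega)$ and the determination of the normalizing characters, including the exponents $5$, $3$, $5$) is routine but bookkeeping-heavy.
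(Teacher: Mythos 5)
First, a point of comparison: the paper does not prove this proposition at all --- it is quoted verbatim as Theorem~6.1 of \cite{ms} (Magaard--Savin) and used as the ``starting point'' for Section~\ref{sec:E8Jac}. So there is no in-paper argument to match against; what you have written is a plan for reproving the cited result. Your plan does follow the standard line of attack for Jacquet modules of minimal representations along Heisenberg parabolics (view $\bfV_{\bar Z}$ as a smooth sheaf on $N/Z$ via the characters of $\bar N/\bar Z$, identify the fibre at $0$ with $\bfV_{\bar N}$, use minimality to control the support and the rank away from $0$, and reassemble by Mackey theory), and this is essentially the strategy of \cite{ms}. In that sense the architecture is sound.

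However, as a proof the proposal is incomplete exactly where the content of the proposition lies. (a) The two essential inputs --- that the support of the kernel is contained in $\overline\Omega$, and that the twisted Jacquet module $\bfV_{\bar N,\psi_{x_0}}$ is one-dimensional for $x_0\in\Omega$ --- are only gestured at, with two alternative routes named but neither carried out; ``one-dimensional because $\bfV$ is minimal'' is not automatic and is itself a theorem requiring either the explicit Kazhdan--Savin model or the Joseph-ideal/associated-variety machinery. (b) The identification of the kernel with $C_c^\infty(\Omega)$ (a trivialized line bundle) needs the stabilizer $\stab_M(x_0)$ to act on the fibre by a character you can compute; you assert it is trivial without argument, and the exponent $5$ in $|i(m)|^5$ is not actually derived (it is not a pure modular-character count: one must track how the center $C$ of $M$ scales $\Omega$ and the Haar measure on it). (c) The final claim $\bfV_{\bar N}\cong\bfV(M)\otimes|i|^3\oplus|i|^5$ asserts a \emph{splitting}; your own description of the degenerate principal series model produces only a two-step filtration, and showing the extension splits (plus pinning down the exponents $3$ and $5$) is a separate argument. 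None of these gaps indicates a wrong approach, but each must be filled before this constitutes a proof rather than a roadmap to the result of \cite{ms}.
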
 

By Section 7 in \cite{ms} we have an identification of vector spaces 
\[ 
N/Z\cong \bar N/\bar Z\cong F \oplus J_{27} \oplus J_{27} \oplus F
\] 
so that the pairing $\langle\cdot,\cdot\rangle$ is given by 
\[ 
\langle (a, B, C,d), (a',B',C',d') \rangle = aa'+ \Tr(BB')+ \Tr(CC') + bb'. 
\] 
Under these identifications, the action of $G_2$ on $N/Z$ is the obvious one, and the centralizer of $G_2$ 
in $\bar N/\bar Z$ is 
\[ 
\bar U/\bar Z \cong F \oplus J_{6} \oplus J_{6} \oplus F
\] 
where $J_6$ is the space of $3\times 3$ matrices with coefficients in $F$.  It follows that the orthogonal complement of $\bar U/\bar Z$ 
in $N/Z$ can be identified by $J^0 \oplus J^0$ where  $J^0$ is the set of $B\in J$ of the form
 $$ B = \(\begin{array}{ccc} 0 & z & -y \\ -z & 0 & x \\ y & -x & 0 \end{array}\) $$
with $x,y,z\in \Oct^0$.  Given this, we may denote an element  $ (B,B')\in J^0\oplus J^0$
  by a six-tuple  
  \[
  (u,u')=((x,y,z),(x',y',z'))
  \]
  of traceless octonions.  
The action of $G_2\times \GSp_6$ on these elements is simple to describe.  First, $g\in G_2$ acts on every component 
of the six-tuple $(u,u')$ from the left, and $h\in \GSp_6$ acts by $(u,u') h^{-1}$. In this way, the character $i$ of $M$ restricts to the 
isogeny character of $\GSp_6$. We highlight the action of certain subgroups: 
$\SL_2\times \SL_2\times \SL_2\subseteq \Sp_6$ acting on the pairs $(x,x')$, $(y,y')$ and $(z,z')$ respectively in the obvious way, 
and $h\in \GL_3$ by
 $$ (uh^{-1},u'h^{t}). $$
Note that $\GSp_6$ preserves, up to the isogeny character, 
 $$ J^0\oplus  J^0\to \wedge^2\Oct^0\qquad ((x,y,z),(x',y',z'))\mapsto x\wedge x' + y\wedge y' + z\wedge z'. $$
 
 Let $\Omega_0$ be the intersection of $\Omega$ with $J^0\oplus J^0$, the orthogonal complement of $\bar U/\bar Z$.  It follows, 
from Proposition  \ref{prop:msE8}, that there is an exact sequence of $G_2 \times \GSp_6$-modules 
 \[ 
0\rightarrow C_c^{\infty}(\Omega_0)\rightarrow \bfV_{\bar U}\rightarrow  \bfV_{\bar N}\rightarrow 0, 
\]
where $(g, h)\in G_2 \times \GSp_6$ acts on $f \in C_c^{\infty}(\Omega_0)$ by 
\[ 
\Pi(g,h)f(x)= |i(h)|^5 f(g^{-1} x h). 
\] 
In order to understand $C_c^{\infty}(\Omega_0)$, we need to compute $G_2 \times \GSp_6$-orbits on $\Omega_0$.

\begin{proposition}\label{prop:E8Omega0}
The set $\Omega_0$ consists of pairs of $((x,y,z),(x',y',z'))\in J^0\times J^0$ such that $\langle x,y,z,x',y',z'\rangle$ is
 a non-zero null subspace of $\Oct^0$, and such that
 $$ x\wedge x' + y\wedge y' + z\wedge z'=0. $$
Moreover, $\Omega_0$ consists of two $G_2\times \GSp_6$ orbits $\Omega_1$ and $\Omega_2$ where
 $$ \Omega_m = \{((x,y,z),(x',y',z'))\in \Omega\mid \dim(\langle x,y,z,x',y',z'\rangle)=m\}. $$
\end{proposition}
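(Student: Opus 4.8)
The strategy is to combine the known description of the highest weight orbit $\Omega\subset N/Z$ with a direct analysis of the $G_2\times\GSp_6$-action on $J^0\oplus J^0$. Recall from the discussion above that $\Omega_0=\Omega\cap(J^0\oplus J^0)$, and that under the identification $N/Z\cong F\oplus J_{27}\oplus J_{27}\oplus F$ the space $N/Z$ carries the structure of the Freudenthal triple system attached to the cubic Jordan algebra $J_{27}$, with $\Omega$ the set of nonzero rank-one elements. Writing an element as $(\alpha,A,B,\beta)$, these are cut out by the rank-one conditions $A^\#=\beta B$ and $B^\#=\alpha A$ together with the remaining conditions bilinear in $A,B$ (involving the cross product and the trace form on $J_{27}$). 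The plan is to substitute $\alpha=\beta=0$ and take $A=A(x,y,z)$, $B=A(x',y',z')$ to be the skew octonionic matrices parametrising $J^0\oplus J^0$, and to unwind these conditions using the multiplication Table~\ref{table} and the formula for the adjoint $\#$ on $J_{27}$.

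The core of the argument is the resulting translation into octonionic identities. For the skew matrix with entries $x,y,z\in\Oct^0$ the equation $A^\#=0$ reads $\N(x)=\N(y)=\N(z)=0$ and $xy=yz=zx=0$; since $\overline{uv}=vu$ for $u,v\in\Oct^0$, each $uv=0$ forces $vu=0$, so $A^\#=0$ is exactly the condition that $\langle x,y,z\rangle$ be a null subspace, and similarly $B^\#=0$ says $\langle x',y',z'\rangle$ is null. One then has to show that the remaining bilinear conditions amount precisely to the vanishing of all the mixed products $xx',xy',\dots,zz'$ — so that the whole span $\langle x,y,z,x',y',z'\rangle$ is a null subspace — together with the relation $x\wedge x'+y\wedge y'+z\wedge z'=0$ in $\wedge^2\Oct^0$, and conversely that these two conditions imply all the rank-one equations. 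As $0\notin\Omega$ the span is nonzero, and the first assertion follows. The hard part will be precisely this step: the $\#$-conditions, restricted, only directly give that $\langle x,y,z\rangle$ and $\langle x',y',z'\rangle$ are null, and one must bring in the full set of bilinear rank-one conditions — and be careful with the non-associativity of $\Oct$ — to upgrade this to the whole span being null and to extract the $21$ coordinates of the $\wedge^2$ relation.

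The orbit count is then soft. By the Remark in Section~\ref{sec:setup} a null subspace of $\Oct^0$ has dimension at most $2$, so the nonzero span $\langle x,y,z,x',y',z'\rangle$ has dimension $1$ or $2$; this yields $\Omega_0=\Omega_1\sqcup\Omega_2$ with $\Omega_m$ as stated, and it remains to show $G_2\times\GSp_6$ is transitive on each. On $\Omega_1$: $G_2$ is transitive on nonzero null lines, so we may take the span to be $Fs_1$; the six-tuple is then $(a_1s_1,\dots,a_6s_1)$ with $\mathbf a=(a_1,\dots,a_6)\neq0$, the wedge relation holds automatically, and $\Sp_6\subseteq\GSp_6$ — which acts on the six-tuple by $(u,u')\mapsto(u,u')h^{-1}$, i.e.\ by $\mathbf a\mapsto\mathbf a h^{-1}$ — is already transitive on $F^6\setminus\{0\}$. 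On $\Omega_2$: $G_2$ is transitive on the $2$-dimensional null subspaces (these form the single orbit $G_2/P_2$), so we may take the span to be $V_2=\langle s_1,t_2\rangle$; expanding the six octonions in the basis $s_1,t_2$ assembles a $2\times6$ matrix $W$ over $F$ of rank $2$, and a short computation gives $x\wedge x'+y\wedge y'+z\wedge z'=\omega(r_1,r_2)\,s_1\wedge t_2$, where $r_1,r_2$ are the rows of $W$ and $\omega$ is the standard symplectic form on $F^6$. Hence the wedge relation says exactly that $\langle r_1,r_2\rangle$ is an isotropic $2$-plane in $F^6$, and $\Sp_6$ is transitive on ordered bases of isotropic $2$-planes. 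Both $\Omega_1$ and $\Omega_2$ are nonempty (for $\Omega_2$, because $V_2$ really is a $2$-dimensional null subspace and $F^6$ has isotropic $2$-planes), so each is a single $G_2\times\GSp_6$-orbit, completing the proof.
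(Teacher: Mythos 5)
There is a genuine gap, and you have flagged it yourself: the sentence ``The hard part will be precisely this step'' is exactly where the proposition's first assertion lives, and you never carry that step out. The paper's proof spends essentially all of its effort there, in two moves you do not supply. First, knowing only that $(0,B,B',0)$ has rank one gives $B^2=\Tr(B)B=0$, hence that $\langle x,y,z\rangle$ and (symmetrically) $\langle x',y',z'\rangle$ are each null; to upgrade this to the \emph{whole} span $\langle x,y,z,x',y',z'\rangle$ being null, the paper acts by the subgroup $\SL_2\times\SL_2\times\SL_2\subseteq\Sp_6$ on the pairs $(x,x')$, $(y,y')$, $(z,z')$ to swap primed and unprimed entries and then reapplies the rank-one condition to the new first component; your proposal instead hopes the ``remaining bilinear conditions'' will kill all mixed products $xy'$, etc., but never exhibits or checks those conditions. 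Second, the wedge relation $x\wedge x'+y\wedge y'+z\wedge z'=0$ is \emph{not} automatic from nullity in the two-dimensional case: writing $x'=ax+cz$, $z'=bz+dx$ the sum equals $(c-d)\,x\wedge z$, and showing $c=d$ requires the extra structural input from Lemma~7.5 of \cite{ms} that $B'=A\times B$ for some $A\in J_{27}$, unpacked in the paper's Lemma~\ref{lem:crossprodcondition} via an explicit computation with the octonion multiplication table. Without an analogue of that computation your characterization of $\Omega_0$ is unproved, and everything downstream (including nonemptiness of $\Omega_1,\Omega_2$ as subsets of $\Omega_0$, which needs the two base points to actually lie in the highest weight orbit) rests on it.

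On the other hand, your orbit-counting argument, granted the characterization, is correct and in one place cleaner than the paper's: identifying the six-tuple supported on a $2$-dimensional null space with a rank-$2$ matrix $W$ over $F$, observing that the wedge condition says $\omega(r_1,r_2)=0$ for the rows of $W$, and invoking Witt's theorem for $\Sp_6$ on ordered bases of isotropic planes replaces the paper's hands-on reduction (using $\GL_3$ to force $y=0$ and then normalizing coefficients). That is a nice repackaging of the ``soft'' half, but it cannot substitute for the missing octonionic computation in the ``hard'' half.
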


\begin{proof} If $ (0,B,B',0)\in \Omega_0$ then, by Lemma 7.5 in \cite{ms},  $B$ is a rank one matrix, $B^2= \Tr(B) B$. 
Since $\Tr(B)=0$, we have $B^2=0$, and this is equivalent to  
 $$ x^2=y^2=z^2=xy=yz=0, $$
 i.e. the entries of $B$ span a null subspace of $\Oct^0$. 
 Acting by $\SL_2\times \SL_2\times \SL_2$, we can replace $x$, $y$ and $z$ (all or some) by $x'$, $y'$ and $z'$, 
 respectively. Hence $\langle x,y,z,x',y',z'\rangle$ is a non-zero null subspace of $\Oct^0$. 
 If the dimension of this null space is 1 then, without loss of generality, we can assume that $x\neq 0$. 
  Then $(u,u')$ is in the $\GSp_6$ orbit of $((x,0,0),(0,0,0))$. Since $G_2$ acts transitively on 1-dimensional null subspaces, we 
  have one $G_2\times \GSp_6$ orbit. 
 If the dimension of the null space  is 2 then, without loss of generality, we can assume that 
 $x$ and $z$ are a basis of this space. Using the action of $\GL_3$ we can arrange that $y=0$. Since
 \[ 
 x'=ax+cz, y'=ex+ fz, z'=bz + dx 
 \] 
 for some $a,b,c,d,e,f\in F$, 
 $$ x\wedge(ax+cz) + 0\wedge (ex+fz) + z\wedge (bz+cx) = (c-d)(x\wedge z) $$
 and this is 0 if and only if $c=d$. If $c=d$ then it is not too difficult to see that $(u,u')$ is in the $\GSp_6$ orbit of $((x,0,z),(0,0,0))$. 
 Since $G_2$ acts transitively on 2-dimensional null subspaces,  we have one $G_2\times \GSp_6$ orbit.  Thus, to finish the proof 
 we must show that $c=d$. This is done in Lemma \ref{lem:crossprodcondition}, using that $B'= A\times B$ (the cross product) 
 for some $A\in J_{27}$, by Lemma 7.5 in \cite{ms}. 
\end{proof}

\begin{lemma}\label{lem:crossprodcondition}
Suppose that $x,z\in \Oct^0$ be linearly independent such that $x^2=z^2=xz=0$.  Let $x_1,y_1,z_1\in \Oct^0$, and set
 $$ A=A_0+\(\begin{smallmatrix} 0 & z_1 & -y_1\\ -z_1 & 0 & x_1 \\ y_1 & -x_1 & 0 \end{smallmatrix}\),\qquad B=\(\begin{smallmatrix} 0 & z & 0\\ -z & 0 & x \\ 0 & -x & 0 \end{smallmatrix}\)$$
where $A_0\in J_6$.  If
 $$ A\times B =  \(\begin{smallmatrix} 0 & z' & -y'\\ -z' & 0 & x' \\ y' & -x' & 0 \end{smallmatrix}\) $$
is such that $x',y',z'\in \Oct^0$ then $x',y',z'\in Fx+Fz$.  Moreover,
\begin{equation}\label{eq:crossprodcondition}
 x'=bx+cz,\qquad\mbox{and}\qquad z' = az + cx
\end{equation}
for some constants $a,b,c\in F$.
\end{lemma}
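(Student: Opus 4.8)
We plan to prove the lemma by a direct, but carefully organized, computation after first normalizing the null plane $Fx+Fz$. The entire statement is equivariant for the diagonal action of $G_2$ on the octonion entries of $A$ and $B$: since $G_2\subset F_4=\Aut(J_{27})$ acts by Jordan algebra automorphisms it commutes with the Freudenthal cross product (so $(gA)\times(gB)=g(A\times B)$), it fixes $J_6$ pointwise (scalar entries), and it sends $Fx+Fz$ to $Fgx+Fgz$. As $G_2$ acts transitively on $2$-dimensional null subspaces of $\Oct^0$, and the Levi $M_2\cong\GL_2$ of the parabolic $P_2\subset G_2$ acts as the full $\GL_2$ on the model null plane $V_2=\langle s_1,t_2\rangle$ (hence transitively on its ordered bases), we may and do assume $x=s_1$ and $z=t_2$. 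We then expand $x_1,y_1,z_1\in\Oct^0$ in the basis $s_1,s_2,s_3,t_1,t_2,t_3,s_4-t_4$ and write $A_0$ in terms of its six scalar entries.

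Next I would use the explicit formula for the adjoint $X^{\#}$ on $J_{27}$ (diagonal entries of shape $p\,p'-N(a)$, off-diagonal octonion entries of shape $\overline{a_j a_k}-p\,a_i$) and polarize via $X\times Y=(X+Y)^{\#}-X^{\#}-Y^{\#}$. Substituting the very special $B$ — zero diagonal, vanishing $(3,1)$-octonion, $(2,3)$-entry $s_1$, $(1,2)$-entry $t_2$ — collapses the bilinear expressions dramatically: the $(2,3)$-entry $x'$ of $A\times B$ becomes an explicit $F$-linear combination of $t_2y_1$, $s_1$, $t_2$; the $(1,2)$-entry $z'$ a combination of $y_1s_1$, $s_1$, $t_2$; and the $(3,1)$-entry $y'$ a combination of $z_1s_1$, $t_2x_1$, $s_1$, $t_2$. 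The four octonion products $t_2y_1,\ y_1s_1,\ z_1s_1,\ t_2x_1$ are then read off from Table~\ref{table}, giving $x',y',z'$ explicitly in coordinates.

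At this point the hypotheses should trim the parameters exactly as needed. Demanding that the diagonal of $A\times B$ vanish forces the $t_1$-coordinate of $x_1$ and the $s_2$-coordinate of $z_1$ to be $0$ (the only two nontrivial diagonal conditions). Demanding $x',y',z'\in\Oct^0$, i.e.\ that their $s_4$- and $t_4$-components vanish, forces the $t_1$- and $s_2$-coordinates of $y_1$ to be $0$ and imposes one linear relation between coordinates of $x_1$ and $z_1$. Feeding these back in, every component of $x',y',z'$ lying outside $\langle s_1,t_2\rangle=Fx+Fz$ collapses to $0$ — the first assertion. Reading off the two surviving coordinates of $x'$ and of $z'$ then exhibits $x'=bx+cz$ and $z'=az+cx$, and a short check shows that the coefficient of $t_2$ in $x'$ and the coefficient of $s_1$ in $z'$ are one and the same bilinear expression in the parameters; this is \eqref{eq:crossprodcondition}.

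The routine part is the bookkeeping of the dozen-odd coordinates. The substantive point — and the step I expect to be the real obstacle — is precisely the equality of the $t_2$-coefficient of $x'$ with the $s_1$-coefficient of $z'$: this does \emph{not} follow from the $\Oct^0$/trace conditions alone, only from the explicit non-associative multiplication, and it is exactly the identity $x\wedge x'+z\wedge z'=0$ invoked in the proof of Proposition~\ref{prop:E8Omega0} to pin down the two $G_2\times\GSp_6$-orbits. The main technical hazard is keeping the octonion orderings in $X^{\#}$ straight (one must not confuse $\overline{a_j a_k}$ with $\overline{a_k a_j}$), and one should verify along the way that the model plane really is null ($s_1t_2=t_2s_1=s_1^2=t_2^2=0$, in contrast to e.g.\ $\langle s_1,t_1\rangle$, where $s_1t_1=s_4\neq 0$). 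A coordinate-free variant using only $x^2=z^2=xz=zx=0$ and the elementary identities for $X^{\#}$ is possible, but the explicit null basis $s_1,t_2$ makes the cancellations most transparent.
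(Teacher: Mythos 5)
Your plan is correct and follows essentially the same route as the paper's proof: normalize to $x=s_1$, $z=t_2$ by $G_2$-transitivity on $2$-dimensional null subspaces, expand $x_1,y_1,z_1$ in the basis \eqref{eq:Octbasis}, compute $A\times B$ explicitly, and use the vanishing of the diagonal together with tracelessness of $x',y',z'$ to force exactly the coordinate relations you describe (in the paper's notation $b_1^x=a_2^z=0$, $a_2^y=b_1^y=0$, $b_1^z=-a_2^x$), after which $x'$ and $z'$ visibly share the common coefficient $c$. The only cosmetic differences are that the paper evaluates the cross product via $A\circ B-\tfrac12 A\Tr B-\tfrac12 B\Tr A+\tfrac12(\Tr A\Tr B-\Tr(A\circ B))$ rather than by polarizing $X^{\#}$, and it disposes of $A_0$ up front (checking that $A=A_0$ already satisfies \eqref{eq:crossprodcondition}, this being the $\Sp_6$-action) instead of carrying its six scalar entries through the computation.
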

\begin{proof}
Since $G_2$ acts transitively on the set of $2$-dimensional null spaces of $\Oct^0$, by the $G_2$ action (which commutes with the cross product), we may assume that $x=s_1$ and $z=t_2$, and let
\begin{gather*}
 x_1 = a_1^x s_1 + a_2^x s_2 + a_3^x s_3 + b_1^x t_1 + b_2^x t_2 + b_3^x t_3 + c^x (s_4-t_4), \\
 y_1 = a_1^y s_1 + a_2^y s_2 + a_3^y s_3 + b_1^y t_1 + b_2^y t_2 + b_3^y t_3 + c^y (s_4-t_4), \\
 z_1 = a_1^z s_1 + a_2^z s_2 + a_3^z s_3 + b_1^z t_1 + b_2^z t_2 + b_3^z t_3 + c^z (s_4-t_4) \\
\end{gather*}
where the elements $s_i,t_j\in \Oct$ are the basis elements given in \eqref{eq:Octbasis}.

The cross product is given by
 $$ A\times B = A\circ B - \textstyle{\frac{1}{2}}A\Tr{B}- \textstyle{\frac{1}{2}}B\Tr{A}+\textstyle{\frac{1}{2}}(\Tr{A}\Tr{B}-\Tr(A\circ B)).$$
From this a simple calculation shows that if $A=A_0$ then the condition \eqref{eq:crossprodcondition} 
is satisfied\footnote{This is the action of $\Sp_6$}.  We may therefore assume that $A_0=0$.  We find that
 $$ A\times \(\begin{smallmatrix} 0 & z & 0\\ -z & 0 & x \\ 0 & -x & 0 \end{smallmatrix}\) = \frac{1}{2} \(\begin{array}{ccc} -\Tr(z_1z) & y_1x & zx_1 +z_1x \\ xy_1 & 0 & zy_1 \\ xz_1+x_1z & y_1z & -\Tr(xx_1) \end{array}\). $$
Note that the fractor of $1/2$ can be safely ignored since it can be absorbed into $x_1,y_1,z_1$.

This imposes several conditions:
\begin{itemize}
 \item[(A)] $\Tr(z_1z)=\Tr(xx_1)=0$
 \item[(B)] $xy_1,y_1z\in \Oct^0$
 \item[(C)] $xz_1+x_1z\in \Oct^0$
\end{itemize}

Notice that for $w=a_1 s_1 + a_2 s_2 + a_3 s_3 + b_1 t_1 + b_2 t_2 + b_3 t_3 + c (s_4-t_4)\in \Oct^0$, we have
 $$ wz = wt_2 = a_2s_4+b_1s_3-b_3s_1-ct_2, $$
 $$ xw = s_1w = -a_2t_3+a_3t_2+b_1s_4-cs_1. $$
Combining this calculation with condition (A) shows that $a_2^z=b_1^x=0$.  With (B) it implies that $a_2^y=b_1^y=0$, and with condition (C) we get that $b_1^z=-a_2^x$.  Putting this all together, we have
 $$ x' = - y_1z = b_3^ys_1+c^yt_2, $$
 $$ y' = xz_1+x_1z = (a_3^z-c^x)t_2-(c^z+b_3^y)s_1, $$
 $$ z' = -xy_1 = -a_3^yt_2+c^ys_1. $$
This proves the Lemma.
\end{proof}

Proposition \ref{prop:E8Omega0} implies that $C_c^{\infty} (\Omega_2)$ is a submodule of 
$C_c^{\infty} (\Omega_0)$ and $C_c^{\infty} (\Omega_1)$ is a quotient.  Let $S_1$ and $S_2$ be the  
stabilizers of $((s_1,0,0),(0,0,0))$ and $((s_1,t_2,0),(0,0,0))$ in $G_2\times \GSp_6$, respectively. Let $P_m$ and $Q_m$, $m=1,2$, 
be the maximal parabolic subgroups in $G_2$ and $\GSp_6$, respectively, as introduced in 
\ref{g2_parabolics}  and \ref{sp_parabolic}. In particular, these parabolic 
groups come with maps $P_m \times Q_m \rightarrow \GL_m \times \GL_m$. Then $S_m$ is the inverse image of 
$\Delta(\GL(m))$, the diagonally embedded $\GL_m$ into $\GL_m \times \GL_m$. Now one can easily deduce that 
$\bfV_{\bar U}$, as a representation of  $G_2\times \GSp_6$,  has the following three sub quotients, here induction is 
not normalized.

\begin{enumerate} 
\item $C_c^{\infty} (\Omega_2)\cong \Ind^{G_2\times \GSp_6}_{P_2\times Q_2}(C_c^\infty(\GL_2))\otimes |i|^5$.
\item   $C_c^{\infty} (\Omega_1)\cong\Ind^{G_2\times \GSp_6}_{P_1\times Q_1}(C_c^\infty(\GL_1))\otimes|i|^{5}$. 
\item $\bfV_{\bar N}\cong \bfV(M)\otimes |i|^3  \oplus 1\otimes |i|^5 $.
\end{enumerate} 

Proposition \ref{prop:swE8} is simply a normalized version of this result.

\bibliographystyle{plain}
\nocite{sw}
\bibliography{matchbib}

\begin{thebibliography}{1}

\bibitem{borel}
Armand Borel.
\newblock Admissible representations of a semi-simple group over a local field
  with vectors fixed under an {I}wahori subgroup.
\newblock {\em Invent. Math.}, 35:233--259, 1976.

\bibitem{gross}
Benedict~H. Gross.
\newblock On the {S}atake isomorphism.
\newblock In {\em Galois representations in arithmetic algebraic geometry
  (Durham, 1996)}, volume 254 of {\em London Math. Soc. Lecture Note Ser.},
  pages 223--237. Cambridge Univ. Press, Cambridge, 1998.

\bibitem{hms}
Jing-Song Huang, Kay Magaard, and Gordan Savin.
\newblock Unipotent representations of {$G_2$} arising from the minimal
  representation of {$D_4^E$}.
\newblock {\em J. Reine Angew. Math.}, 500:65--81, 1998.

\bibitem{ms}
K.~Magaard and G.~Savin.
\newblock Exceptional {$\Theta$}-correspondences. {I}.
\newblock {\em Compositio Math.}, 107(1):89--123, 1997.

\bibitem{mvw}
Colette M{\oe}glin, Marie-France Vign{\'e}ras, and Jean-Loup Waldspurger.
\newblock {\em Correspondances de {H}owe sur un corps {$p$}-adique}, volume
  1291 of {\em Lecture Notes in Mathematics}.
\newblock Springer-Verlag, Berlin, 1987.

\bibitem{rs}
Stephen Rallis and David Soudry.
\newblock Cubic correspondences arising from {$G\sb 2$}.
\newblock {\em Amer. J. Math.}, 119(2):251--335, 1997.

\bibitem{sw}
Gordan Savin and Michael Woodbury.
\newblock Structure of internal modules and a formula for the spherical vector
  of minimal representations.
\newblock {\em J. Algebra}, 312(2):755--772, 2007.

\end{thebibliography}
\end{document}